\setlist[enumerate]{leftmargin=.5in}
\setlist[itemize]{leftmargin=.5in}
\crefname{hypothesis}{Hypothesis}{Hypotheses}
\title{Nonnegative Polynomials and Circuit Polynomials\thanks{Submitted to the editors DATE.
\funding{This work was supported by NSFC under grants 61732001 and 61532019.}}}
\author{Jie Wang\thanks{Laboratory of Analysis and Architecture of Systems, French National Centre for Scientific Research
  (\email{jwang@laas.fr}, \url{https://wangjie212.github.io/jiewang/}).}}
\def\R{{\mathbb{R}}}
\def\C{{\mathbb{C}}}
\def\N{{\mathbb{N}}}
\def\x{{\mathbf{x}}}
\def\z{{\mathbf{z}}}
\def\bb{{\mathbf{b}}}
\def\a{{\boldsymbol{\alpha}}}
\def\dd{{\boldsymbol{\delta}}}
\def\b{{\boldsymbol{\beta}}}
\def\bv{{\boldsymbol{v}}}
\def\bu{{\boldsymbol{u}}}
\def\A{{\mathscr{A}}}
\def\I{{\mathscr{I}}}
\def\C{{\mathscr{C}}}
\def\supp{\hbox{\rm{supp}}}
\def\PSD{\hbox{\rm{PSD}}}
\def\SONC{\hbox{\rm{SONC}}}
\def\int{\hbox{\rm{int}}}
\def\New{\hbox{\rm{New}}}
\def\Conv{\hbox{\rm{conv}}}
\def\rank{\hbox{\rm{rank}}}
\def\Cone{\hbox{\rm{cone}}}
\newcommand{\revision}[1]{{{\color{black}#1}}}
\begin{document}

\maketitle

\begin{abstract}
  The concept of sums of nonnegative circuit polynomials (SONC) was recently introduced as a new certificate of nonnegativity especially for sparse polynomials. In this paper, we explore the relationship between nonnegative polynomials and SONC polynomials. As a first result, we provide sufficient conditions for nonnegative polynomials with general Newton polytopes to be SONC polynomials, which generalizes the previous result on nonnegative polynomials with simplex Newton polytopes. Secondly, we prove that every SONC polynomial admits a SONC decomposition without cancellation. In other words, SONC decompositions can exactly preserve the sparsity of nonnegative polynomials, which is dramatically different from the classical sum of squares (SOS) decompositions and is a key property to design efficient algorithms for sparse polynomial optimization based on SONC decompositions.
\end{abstract}

\begin{keywords}
  nonnegative polynomial, sum of nonnegative circuit polynomials, certificate of nonnegativity, sum of binomial squares, SONC, SAGE
\end{keywords}

\begin{AMS}
  14P10, 90C25, 12D10, 12D15
\end{AMS}

\section{Introduction}
A real polynomial $f\in\R[\x]=\R[x_1,\ldots,x_n]$ is called a {\em nonnegative polynomial} if its evaluation on every real point is nonnegative. All nonnegative polynomials form a convex cone, denoted by PSD. Certifying nonnegativity of multivariate polynomials is a central problem of real algebraic geometry and also has a deep connection with polynomial optimization. A classical approach for handling this problem is using sum of squares (SOS) decompositions. From the perspective of computation, checking whether a polynomial is a sum of squares boils down to a semidefinite program (SDP) involving a positive semidefinite matrix of size $\binom{n+d}{d}$, where $n$ is the number of variables and $2d$ is the degree of the polynomial \cite{pa}. Hence, the size of the corresponding SDP problem grows combinatorially with $n,d$, which greatly limits the scalability of this approach given the current state of SDP solvers. 

To address the issue of scalability, one possibility is to exploit the structure in the polynomial data, such as symmetry \cite{symmetry}, correlative sparsity \cite{waki}, term sparsity \cite{wang1,wang3,wang2}, correlative-term sparsity \cite{wang4}, just to name a few. Another possibility is to rely on other nonnegativity certificates. Such alternative nonnegativity certificates are in general more restrictive but cheaper to implement, e.g. (scaled) diagonally dominant sums of squares \cite{ahmadi2019dsos}. \revision{However, a common drawback shared by these approaches is that their computational complexity heavily depends on the polynomial degree. As an attempt to overcome this}, Iliman and de Wolff proposed the concept of {\em sums of nonnegative circuit polynomials} (SONC) as a new nonnegativity certificate of polynomials \cite{iw}.
A {\em circuit polynomial} is of the form
$\sum_{\a\in\A}c_{\a}\x^{\a}-d\x^{\b}\in\R[\x],$
where $c_{\a}>0$ for all $\a\in\A$, $\A\subseteq(2\N)^n$ comprises the vertices of a simplex, and $\b$ lies in the interior of this simplex. The support of a circuit polynomial is called a {\em circuit}. The study of circuit polynomials dates back to 1980s by Reznick \cite{re1} in the special case of simplicial agiforms. \revision{After over two decades of quiescence}, a nonnegativity condition for circuit polynomials was given by Paneta, Koeppl and Craciu in the study of biochemical reaction networks \cite{pantea}, and the subject was brought back to people's view. A related certificate, {\em sums of AGE polynomials} (SAGE), was also recently proposed by Murray, Chandrasekaran and Wiermann \cite{murray}, where an AGE polynomial is defined by a nonnegative polynomial with at most one term that can take negative values (called a negative term). The set of nonnegative polynomials that admit SONC decompositions forms a convex cone, i.e. the SONC cone, and the set of nonnegative polynomials that admit SAGE decompositions forms a convex cone, i.e. the SAGE cone. SONC has been leveraged to solve sparse polynomial optimization via geometric programming \cite{diw,lw,pap,se} by Dressler et al. or via second order cone programming \cite{magron2020sonc,socp} by the author and Magron. SAGE has been leveraged to solve sparse polynomial/signomial optimization via relative entropy programming \cite{murray2,murray} by Murray et al. From the perspective of theory, it is natural to ask:
\begin{enumerate}
    \item Which types of nonnegative polynomials are SONC polynomials? Can we provide sufficient conditions for a nonnegative polynomial to admit a SONC decomposition in terms of the support?
    \item What is the relationship bewtween the SONC cone and the SAGE cone?
\end{enumerate}

In \cite{iw}, Iliman and de Wolff proved that if the Newton polytope of a polynomial $f$ is a simplex and there exists a point such that all terms of $f$ except for those corresponding to the vertices of the Newton polytope take negative values on this point, then $f$ is nonnegative if and only if $f$ admits a SONC decomposition (see Theorem \ref{nc-thm2}). The first contribution of this paper is that we generalize this conclusion to polynomials with general Newton polytopes. Particularly, we prove that for a polynomial with one negative term, it is nonnegative if and only if it admits a SONC decomposition (Theorem \ref{intro:thm1}). 

\revision{\begin{theorem}\label{intro:thm1}
Let $f=\sum_{i=1}^mc_i\x^{\a_i}-d_{0}\x^{\b}\in\R[\x]$ with $\a_i\in(2\N)^n,c_i>0,i=1,\ldots,m$. Then $f\in\PSD$ if and only if $f\in\SONC$.
\end{theorem}

Note that Theorem \ref{intro:thm1} tells us that any AGE polynomial admits a SONC decomposition.} As an immediate corollary, we obtain that the SAGE cone and the SONC cone are actually identical. \revision{Taking a step further}, we also provide sufficient conditions for nonnegative polynomials with multiple negative terms admitting a SONC decomposition in terms of the combinatorial structure of supports (Theorem \ref{intro:thm2}).

\revision{\begin{theorem}\label{intro:thm2}
Let $f=\sum_{i=1}^mc_i\x^{\a_i}-\sum_{j=1}^ld_j\x^{\b_j}\in\R[\x]$ with $\a_i\in(2\N)^n,c_i>0,i=1,\ldots,m$ and $d_j<0,j=1,\ldots,l$. Under a technical condition on the Newton polytope of $f$ and assuming that all $\b_j$ lie in the same side of every hyperplane determined by points among $\{\a_1,\ldots,\a_m\}$, then $f\in\PSD$ if and only if $f\in\SONC$.
\end{theorem}}

\revision{From the perspective of computation, computing SONC decompositions encounters the potential obstacle of enumerating exponentially many circuits since the number of lattice points contained in the Newton polytope grows exponentially with the number of variables and the polynomial degree.}
In order to develop efficient algorithms for certifying nonnegativity and polynomial optimization based on SONC decompositions, a core issue that must be addressed is: which circuits are really needed when constructing SONC decompositions for a given polynomial? As the second contribution of this paper, we clarify an important fact that every SONC polynomial can decompose into a sum of nonnegative circuit polynomials by merely using the support of the original polynomial. In other words, SONC decompositions can exactly preserve the sparsity of polynomials. \revision{Actually, more is true.} we prove that every SONC polynomial admits a SONC decomposition without cancellation \revision{via a connection with sums of binomial squares (Theorem \ref{intro:thm3}). This is dramatically different from the SOS decompositions of nonnegative polynomials, for which extra support and cancellation are needed in general. 

\revision{\begin{theorem}\label{intro:thm3}
If a polynomial $f\in\SONC$, then $f$ decomposes into a sum of nonnegative circuit polynomials without cancellation.
\end{theorem}}

Theorem \ref{intro:thm3} hence provides a significant step towards bypassing the bottleneck of enumerating all circuits in the computation of SONC decompositions. In fact, this result also implies that the complexity of SONC/SAGE certificates does not depend on the polynomial degree, a sharp contrast with SOS-based certificates.}


The rest of this paper is organized as follows. In Section \ref{sec2}, we recall some basic facts on SONC polynomials. After that we consider the problem which types of nonnegative polynomials are SONC polynomials. We deal with the case of nonnegative polynomials with one negative term in Section \ref{sec3} and deal with the case of nonnegative polynomials with multiple negative terms in Section \ref{sec4}. In Section \ref{sec5}, we prove that every SONC polynomial decomposes into a sum of nonnegative circuit polynomials without cancellation. Conclusions and discussions are given is Section \ref{sec6}.

\section{Preliminaries}\label{sec2}
\subsection{Notation and nonnegative polynomials}
Let $\R[\x]=\R[x_1,\ldots,x_n]$ be the ring of real $n$-variate polynomials. Let $\R^*$ be the set of nonzero real numbers, $\R_+$ the set of positive real numbers, and $\R_{\ge0}$ the set of nonnegative real numbers. We use boldface to indicate a (column) vector, e.g., $\a=[\alpha_1,\ldots,\alpha_n]^\intercal$. For a finite set $\A\subseteq\N^n$, we denote by $\Cone(\A)$ the conic hull of $\A$, by $\Conv(\A)$ the convex hull of $\A$, and by $V(\A)$ the vertices of the convex hull of $\A$. We also denote by $V(P)$ the vertex set of a polytope $P$. We consider a polynomial $f\in\R[\x]$ supported on a finite set $\A\subseteq\N^n$, i.e. $f$ is of the form $f(\x)=\sum_{\a\in\A}c_{\a}\x^{\a}$ with $c_{\a}\in\R, \x^{\a}=x_1^{\alpha_1}\cdots x_n^{\alpha_n}$. The {\em support} of $f$ is $\supp(f):=\{\a\in\A\mid c_{\a}\ne0\}$ and the {\em Newton polytope} of $f$ is defined as $\New(f):=\Conv(\supp(f))$. For a polytope $P$, we use $P^{\circ}$ to denote the interior of $P$. For a positive integer $m$, let $[m]:=\{1,\ldots,m\}$.

A polynomial $f\in\R[\x]$ which is nonnegative over $\R^n$ is called a {\em nonnegative polynomial}. The class of nonnegative polynomials is denoted by PSD, which forms a convex cone.

A nonnegative polynomial must satisfy the following necessary conditions.
\begin{proposition}(\cite[Theorem 3.6]{re1})\label{nc-prop2}
Let $\A\subseteq\N^n$ and $f=\sum_{\a\in \A}c_{\a}\x^{\a}\in\R[\x]$ with $\supp(f)=\A$. Then $f$ is nonnegative only if the following hold:
\begin{enumerate}
  \item $V(\A)\subseteq(2\N)^n$;
  \item If $\a\in V(\A)$, then the corresponding coefficient $c_{\a}$ is positive.
\end{enumerate}
\end{proposition}

For the remainder of this paper, we assume for simplicity that the monomial factor of any polynomial $f$ is $1$, that is, if $f=\x^{\a'}(\sum c_{\a}\x^{\a})$ such that $\sum c_{\a}\x^{\a}\in\R[\x]$ and $\a'\in\N^n$, then $\x^{\a'}=1$. Otherwise, we can always factor out the monomial factor.

\subsection{Circuit polynomials}
Following \cite{re1}, a subset $\A\subseteq(2\N)^n$ is called a {\em trellis} if $\A$ comprises the vertices of a simplex.
\begin{definition}
Let $\A$ be a trellis and $f\in\R[\x]$. Then $f$ is called a {\em circuit polynomial} if it is of the form
\begin{equation}\label{nc-eq}
f(\x)=\sum_{\a\in\A}c_{\a}\x^{\a}-d\x^{\b},
\end{equation}
with $c_{\a}\in\R_+$ and $\b\in\Conv(\A)^{\circ}$. The support of a circuit polynomial is called a {\em circuit}. 
\end{definition}

\revision{\begin{example}
The Motzkin polynomial $f=1+x^4y^2+x^2y^4-3x^2y^2$ is a nonnegative circuit polynomial.
\end{example}}

For a circuit polynomial $f=\sum_{\a\in\A}c_{\a}\x^{\a}-d\x^{\b}$, since $\b\in\Conv(\A)^{\circ}$, $\b$ admits a unique convex representation: 
$\b=\sum_{\a\in\A}\lambda_{\a}\a$ with $\lambda_{\a}>0$ and $\sum_{\a\in\A}\lambda_{\a}=1$.
Then we define the corresponding {\em circuit number} as $\Theta_f:=\prod_{\a\in\A}(c_{\a}/\lambda_{\a})^{\lambda_{\a}}$.
It is known that the nonnegativity of a circuit polynomial is decided by its circuit number alone.
\begin{theorem}(\cite[Theorem 3.8]{iw})\label{nc-thm1}
Let $f=\sum_{\a\in\A}c_{\a} \x^{\a}-d\x^{\b}\in\R[\x]$ be a circuit polynomial and $\Theta_f$ its circuit number. Then $f$ is nonnegative if and only if either $\b\in(2\N)^n$ and $d\le\Theta_f$ or $\b\notin(2\N)^n$ and $|d|\le\Theta_f$.
\end{theorem}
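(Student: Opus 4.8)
The plan is to derive both implications from the weighted arithmetic--geometric mean (AM--GM) inequality, using the barycentric coordinates $\lambda_{\a}$ of $\b$ as weights. The starting point is that for every $\x\in\R_+^n$,
\[
\sum_{\a\in\A}c_{\a}\x^{\a}=\sum_{\a\in\A}\lambda_{\a}\frac{c_{\a}\x^{\a}}{\lambda_{\a}}\ \ge\ \prod_{\a\in\A}\Bigl(\frac{c_{\a}\x^{\a}}{\lambda_{\a}}\Bigr)^{\lambda_{\a}}=\Theta_f\,\x^{\sum_{\a\in\A}\lambda_{\a}\a}=\Theta_f\,\x^{\b},
\]
where the middle step is AM--GM with the weights $\lambda_{\a}$ (which are positive and sum to $1$) and the last step uses $\b=\sum_{\a}\lambda_{\a}\a$; by continuity this inequality persists on $\R_{\ge0}^n$. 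For the \emph{sufficiency} direction, let $\x\in\R^n$ be arbitrary and write $|\x|=(|x_1|,\dots,|x_n|)$. Since $\A\subseteq(2\N)^n$, every $\a\in\A$ is even, so $\x^{\a}=|\x|^{\a}\ge0$, and $|\x^{\b}|=|\x|^{\b}$; hence
\[
f(\x)=\sum_{\a\in\A}c_{\a}|\x|^{\a}-d\,\x^{\b}\ \ge\ \sum_{\a\in\A}c_{\a}|\x|^{\a}-|d|\,|\x|^{\b}\ \ge\ (\Theta_f-|d|)\,|\x|^{\b},
\]
which is nonnegative once $|d|\le\Theta_f$; and if moreover $\b\in(2\N)^n$, then $\x^{\b}=|\x|^{\b}\ge0$, so the same chain with $d$ in place of $|d|$ gives $f(\x)\ge(\Theta_f-d)|\x|^{\b}\ge0$ already when $d\le\Theta_f$ (the subcase $d\le0$ being trivial).

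For \emph{necessity}, suppose $f\ge0$; the idea is to evaluate $f$ at a point where the AM--GM step above is an equality. Equality holds whenever all the ratios $c_{\a}\x^{\a}/\lambda_{\a}$, $\a\in\A$, coincide, and taking logarithms turns this requirement into a linear system for $(\log x_1,\dots,\log x_n)$ whose coefficient matrix has rows $\a-\a_0$ for $\a\in\A\setminus\{\a_0\}$, with $\a_0\in\A$ fixed. Because $\A$ is a trellis, i.e. the vertex set of a simplex, these rows are linearly independent, so the system is solvable; exponentiating produces a point $\x^{*}\in\R_+^n$ with $\sum_{\a}c_{\a}(\x^{*})^{\a}=\Theta_f(\x^{*})^{\b}=:K>0$. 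Then $0\le f(\x^{*})=K-d(\x^{*})^{\b}$ forces $d\le K/(\x^{*})^{\b}=\Theta_f$, which is the bound needed in both cases. If in addition $\b\notin(2\N)^n$, choose an index $j$ with $\beta_j$ odd and let $\x^{**}$ be $\x^{*}$ with its $j$-th coordinate negated: since every $\a\in\A$ is even we have $(\x^{**})^{\a}=(\x^{*})^{\a}$, whereas $(\x^{**})^{\b}=-(\x^{*})^{\b}$; hence $0\le f(\x^{**})=K+d(\x^{*})^{\b}$ forces $d\ge-\Theta_f$, so $|d|\le\Theta_f$.

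I expect the necessity direction to be the main obstacle. Its substantive content is the recognition that the trellis hypothesis is exactly what guarantees consistency of the log-linear equality system---affine independence of $\A$ is equivalent to the relevant matrix having full row rank---so that the AM--GM bound is genuinely attained rather than merely approached; the complementary ingredient is the parity bookkeeping (all exponents in $\A$ even, some exponent of $\b$ odd), which lets a single sign flip reverse the sign of $\x^{\b}$ without disturbing any $\x^{\a}$. The sufficiency direction, by contrast, is a short estimate once the reduction to $|\x|$ and the AM--GM inequality are in hand.
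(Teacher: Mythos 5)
Your proof is correct: the weighted AM--GM bound with the barycentric weights $\lambda_{\a}$ gives sufficiency after reducing to $|\x|$ via evenness of $\A$, and the affine independence of a trellis makes the log-linear equality system solvable, which yields the attainment point needed for necessity (with the sign flip handling $\b\notin(2\N)^n$). The paper states this theorem as a cited result from Iliman--de Wolff without reproducing a proof, and your argument is essentially the standard one underlying that reference, so nothing further is needed.
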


\begin{remark}
For the concise of narrative, we also view a monomial square as a nonnegaive circuit polynomial.
\end{remark}

The following proposition characterizes the zeros of a circuit polynomial when the Newton polytope is full-dimensional.
\begin{proposition}(\cite[Proposition 3.4 and Corollary 3.9]{iw})\label{nc-prop1}
Let $f=\sum_{i=0}^nc_i \x^{\a_i}-\Theta_f\x^{\b}\in\R[\x]$ be a circuit polynomial, $\Theta_f$ the circuit number, and $\b=\sum_{i=0}^n\lambda_i\a_i$ with $\lambda_i>0$ and $\sum_{i=0}^n\lambda_i=1$. Then $f$ has exactly one zero $\x_*$ in $\R_+^{n}$ which satisfies:
\begin{equation}\label{nc-eq1}
\frac{c_0\x_*^{\a_0}}{\lambda_0}=\cdots=\frac{c_n\x_*^{\a_n}}{\lambda_n}=\Theta_f\x_*^{\b}.
\end{equation}
Moreover, if $\x$ is any zero of $f$, then $|\x|=\x_*$, i.e. $|x_i|=(x_*)_{i}$ for $i=1,\ldots,n$.
\end{proposition}
\begin{proof}
Consider $f'=\lambda_0f/(c_0\x^{\a_0})$. Clearly, the zeros in $\R_+^{n}$ of $f$ coincide with the zeros in $\R_+^{n}$ of $f'$. By Proposition 3.4 in \cite{iw}, $f'$ and hence $f$ have exactly one zero $\x_*$ in $\R_+^{n}$ which satisfies $\x_*^{\a_i-\a_0}=(\lambda_ic_0)/(c_i\lambda_0)$ for $i=1,\ldots,n$. Let $s=(c_0\x_*^{\a_0})/\lambda_0=\cdots=(c_n\x_*^{\a_n})/\lambda_n$. Then $s=\sum_{i=0}^n\lambda_i s=\sum_{i=0}^nc_i\x_*^{\a_i}=\Theta_f\x_*^{\b}$ and so \eqref{nc-eq1} is proved. The last statement of the theorem follows from Corollary 3.9 in \cite{iw}.
\end{proof}

\begin{remark}\label{sec2:rm1}
Note that in Proposition \ref{nc-prop1}, $\x_*\in\R_+^{n}$ and the circuit number $\Theta_f$ are uniquely determined by the equations \eqref{nc-eq1}.
\end{remark}

We shall say that a polynomial is a {\em sum of nonnegative circuit polynomials (SONC)}, if it can be rewritten as a sum of nonnegative circuit polynomials. Clearly, an explicit representation of a SONC polynomial as a sum of nonnegative circuit polynomials provides a certificate of its nonnegativity, which is called a {\em SONC decomposition}. The class of SONC polynomials forms a convex cone, i.e. the {\em SONC cone}.

The following theorem from \cite{iw} adapted to our notation gives a characterization for a nonnegative polynomial to be a SONC polynomial when the Newton polytope is a simplex.
\begin{theorem}(\cite[Corollary 7.5]{iw})\label{nc-thm2}
Let $f=\sum_{i=0}^n c_i \x^{\a_i}-\sum_{j=1}^l d_j\x^{\b_j}\in\R[\x]$ be nonnegative with $\a_i\in(2\N)^n, c_i\in\R_+, i=0,\ldots,n$ such that $\New(f)$ is a simplex and $\b_j\in\New(f)^{\circ}$ for $j=1,\ldots,l$. If there exists a point $\bv=[v_k]\in(\R^*)^n$ such that $d_j\bv^{\b_j}>0$ for all $j$, then $f\in\SONC$.
\end{theorem}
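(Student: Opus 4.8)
The implication $f\in\SONC\Rightarrow f\in\PSD$ is immediate, since a nonnegative circuit polynomial is nonnegative and a sum of nonnegative polynomials is nonnegative; the content is the converse. First I would make two reductions. Since $\a_i\in(2\N)^n$ we have $\bv^{\a_i}>0$, and $d_j\bv^{\b_j}>0$ by hypothesis, so after the substitution $x_i\mapsto v_ix_i$ we may assume $d_j>0$ for all $j$; this substitution, together with its inverse $x_i\mapsto x_i/v_i$, carries nonnegative circuit polynomials to nonnegative circuit polynomials (the circuit number scales by $|\bv^{\b}|$ and the trellis stays a trellis because its exponents are even), hence preserves both $\PSD$ and $\SONC$. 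Writing a point as $\sigma\cdot\y$ with $\sigma\in\{\pm1\}^n$ and $\y\in\R_+^n$, and using $(\sigma\y)^{\a_i}=\y^{\a_i}$, $(\sigma\y)^{\b_j}=\pm\y^{\b_j}$, one gets $f(\sigma\y)\ge f(\y)$, so for $d_j>0$ the condition $f\in\PSD$ becomes $\sum_{i=0}^nc_i\y^{\a_i}\ge\sum_{j=1}^ld_j\y^{\b_j}$ for all $\y\in\R_+^n$. Finally note that $\a_0,\dots,\a_n$, being the $n+1$ vertices of the full-dimensional simplex $\New(f)$, are affinely independent, and write $\b_j=\sum_{i=0}^n\lambda_{ij}\a_i$ for the barycentric coordinates, all $\lambda_{ij}>0$ since $\b_j\in\New(f)^{\circ}$.

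The heart of the proof is to split the vertex coefficients as $c_i=\sum_{j=1}^lc_{ij}$ with $c_{ij}>0$ so that $C_j(c):=\prod_{i=0}^n(c_{ij}/\lambda_{ij})^{\lambda_{ij}}\ge d_j$ for each $j$: then every $f_j:=\sum_ic_{ij}\x^{\a_i}-d_j\x^{\b_j}$ is a nonnegative circuit polynomial by Theorem \ref{nc-thm1} (its trellis is $\{\a_i\}$, $\b_j$ is interior, and $\Theta_{f_j}=C_j(c)\ge d_j>0$), and $f=\sum_jf_j$ is the required SONC decomposition. Existence of such a split is a convex feasibility problem, since each $\log C_j$ is concave and the admissible set $\Pi:=\{(c_{ij}):c_{ij}\ge0,\ \sum_jc_{ij}=c_i\ \forall i\}$ is a product of simplices; concretely it suffices to show $M:=\max_{c\in\Pi}\min_{1\le j\le l}\big(\log C_j(c)-\log d_j\big)\ge0$, since then a maximizer automatically has all $c_{ij}>0$. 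By Sion's minimax theorem, $M=\min_w\max_{c\in\Pi}\sum_jw_j\big(\log C_j(c)-\log d_j\big)$, the minimum taken over weight vectors $w=(w_1,\dots,w_l)$ with $w_j\ge0$ and $\sum_jw_j=1$; for fixed $w$ the inner maximization separates over the index $i$ into entropy-type problems solved at $c_{ij}=c_iw_j\lambda_{ij}/W_i$, where $W_i:=\sum_jw_j\lambda_{ij}$. A routine computation then gives $M=\min_w\big(\log C_w-\log D_w\big)$, with $C_w:=\prod_i(c_i/W_i)^{W_i}$ and $D_w:=\prod_j(d_j/w_j)^{w_j}$, so everything reduces to proving $C_w\ge D_w$ for every such $w$.

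Fix $w$ and set $\b_w:=\sum_jw_j\b_j=\sum_iW_i\a_i$; since the $\lambda_{ij}$ are positive, so are the $W_i$, hence $\b_w\in\New(f)^{\circ}$. Weighted AM--GM gives $\sum_jd_j\y^{\b_j}\ge D_w\y^{\b_w}$, so the nonnegativity hypothesis yields $\varphi(\y):=\sum_ic_i\y^{\a_i-\b_w}\ge D_w$ for all $\y\in\R_+^n$. On the other hand, since $\b_w$ is interior, $0$ lies in the interior of $\Conv\{\a_i-\b_w\mid 0\le i\le n\}$, so $\varphi$ is coercive on $\R_+^n$ and attains its infimum at an interior point $\y^{\sharp}$. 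At $\y^{\sharp}$ the relation $\nabla_{\log\y}\varphi=0$ reads $\sum_ic_i(\y^{\sharp})^{\a_i-\b_w}(\a_i-\b_w)=0$; subtracting the appropriate scalar multiple of $\sum_iW_i(\a_i-\b_w)=0$ and using that $\a_0,\dots,\a_n$ are affinely independent forces $c_i(\y^{\sharp})^{\a_i-\b_w}/W_i$ to be independent of $i$. That is exactly the equality condition in the AM--GM inequality $\sum_ic_i\y^{\a_i}\ge C_w\y^{\b_w}$, so $\inf_{\y}\varphi=\varphi(\y^{\sharp})=C_w$, and therefore $C_w\ge D_w$.

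Combining these gives the splitting, hence the SONC decomposition of the reduced polynomial, and undoing $x_i\mapsto v_ix_i$ finishes the proof; moreover each $f_j$ is supported on $\{\a_i\}\cup\{\b_j\}\subseteq\supp(f)$, so only the support of $f$ is used. The step I expect to be the main obstacle is the identification $\inf_\y\varphi=C_w$: the bound $\inf_\y\varphi\ge C_w$ is plain AM--GM, but the reverse inequality requires the AM--GM lower bound to be \emph{attained} at a positive point, i.e. a solution $\y^{\sharp}$ of the a priori overdetermined system ``$c_i\y^{\a_i}$ proportional to $W_i$'', and it is the coercivity-plus-critical-point argument, together with affine independence of the vertices, that produces it. The remaining ingredients (the sign reduction, invariance of $\SONC$ under coordinate scalings, checking the hypotheses of Sion's theorem, and solving the separable inner maximization) are routine.
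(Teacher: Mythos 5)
Your proof is correct, and it takes a genuinely different route from the machinery in this paper. Note first that the paper does not prove Theorem \ref{nc-thm2} at all --- it is quoted from Iliman--de Wolff --- so the relevant comparison is with the paper's proofs of its generalizations (Lemma \ref{npgp-thm4}, Theorem \ref{npgp-thm7}, Theorem \ref{npmt-thm1}). Those proceed by passing to the extremal coefficient $d^*$ for which the polynomial sits on the boundary of the PSD cone, using the resulting unique zero $\x_*\in\R_+^n$ (Lemma \ref{sec1-lm}) to convert the splitting problem into a \emph{linear} feasibility system in the variables $s_k$, and proving nonnegative solvability via Helly's theorem (Corollary \ref{npgp-cor1} and Lemma \ref{npgp-lm1}); the simplex case is then essentially the case $r=1$. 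You instead keep $f$ as given (no need for it to attain a zero), pose the splitting $c_i=\sum_j c_{ij}$ as a concave feasibility problem over a product of simplices, dualize via a minimax theorem, and verify each dual inequality $C_w\ge D_w$ by locating the minimizer of $\varphi(\y)=\sum_i c_i\y^{\a_i-\b_w}$ and using affine independence of the $\a_i$ to force the AM--GM equality condition --- this last step is exactly the content of Proposition \ref{nc-prop1} and Theorem \ref{sec1-thm2}, and your kernel argument (the kernel of the matrix with columns $\a_i-\b_w$ is one-dimensional and contains the positive vector $(W_i)$) is sound. What your approach buys is avoiding the extremal/zero-locus analysis and Helly entirely, exhibiting the decomposition as the primal solution of an explicit geometric program; what it costs is care at the one technical point you flag: Sion's theorem is being applied to an extended-real-valued function, since $\log C_j(c)=-\infty$ when some $c_{ij}=0$. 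This is repairable without perturbation: if the closed system $\{C_j(c)\ge d_j\ \forall j\}$ were infeasible, upper semicontinuity on the compact set $\Pi$ gives $\max_{c\in\Pi}\min_j\bigl(\log C_j(c)-\log d_j\bigr)=-\eta<0$ \emph{strictly}, and separating the convex set $\{y\in\R^l\mid y\le(\log C_j(c)-\log d_j)_j\text{ for some }c\in\Pi\}$ from the open box $(-\eta,\infty)^l$ produces a weight vector $\lambda$ with $\log C_\lambda-\log D_\lambda\le-\eta<0$, contradicting your inequality $C_\lambda\ge D_\lambda$. With that detail spelled out, I see no gap.
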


\section{Nonnegative polynomials with one negative term}\label{sec3}
Now following the line of Theorem \ref{nc-thm2}, we study which types of nonnegative polynomials with general Newton polytopes are SONC polynomials. The well-known Hilbert's classification on the coincidence of nonnegative polynomials and SOS polynomials is according to the number of variables and the degree of polynomials. As to the SONC case, we will see that the related classification depends on the combinatorical structure of supports of polynomials. In this section, we deal with the case of nonnegative polynomials with one negative term (by a negative term we refer to a term that takes a negative value at some point), i.e., polynomials of the form $f_d=\sum_{i=1}^mc_i \x^{\a_i}-d\x^{\b}\in\R[\x]$ with $\a_i\in(2\N)^n,c_i\in\R_+,i=1,\ldots,m$ and $\b\notin V(\New(f_d))$. Let $\partial\New(f_d)$ denote the boundary of $\New(f_d)$. We first reduce the case of $\b\in\partial\New(f_d)$ to the case $\b\in\New(f_d)^{\circ}$ by the following lemma.

\begin{lemma}\label{sec3-lm}
Let $f_d=\sum_{i=1}^mc_i \x^{\a_i}-d\x^{\b}\in\R[\x]$ with $\a_i\in(2\N)^n,c_i\in\R_+,i=1,\ldots,m$ and $\b\in\partial\New(f_d)$. Furthermore, let $F$ be the minimal face of $\New(f_d)$ containing $\b$. Then $f_d$ is nonnegative if and only if the restriction of $f_d$ to the face $F$ is nonnegative.
\end{lemma}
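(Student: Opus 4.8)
The plan is to prove the two implications separately; I expect the ``if'' direction to be immediate and the ``only if'' direction to come down to the standard face-restriction argument. Write $f_d|_F:=\sum_{\a_i\in F}c_i\x^{\a_i}-d\x^{\b}$ for the restriction of $f_d$ to $F$; since $\b\in F$ by hypothesis, the negative term belongs to $f_d|_F$, and $f_d-f_d|_F=\sum_{\a_i\notin F}c_i\x^{\a_i}$. For the ``if'' direction, I would simply note that each $\a_i$ lies in $(2\N)^n$ and each $c_i>0$, so every monomial $c_i\x^{\a_i}$ is nonnegative on all of $\R^n$; hence $f_d(\x)\ge f_d|_F(\x)$ for every $\x\in\R^n$, and therefore $f_d|_F\in\PSD$ implies $f_d\in\PSD$.

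For the ``only if'' direction I would argue by rescaling along the face. Since $F$ is a proper face of the polytope $\New(f_d)$, I first pick $\bu\in\R^n$ such that, setting $c:=\max_{\a\in\New(f_d)}\langle\bu,\a\rangle$, one has $F=\{\a\in\New(f_d):\langle\bu,\a\rangle=c\}$; such a $\bu$ exists, and then $\langle\bu,\a_i\rangle=c=\langle\bu,\b\rangle$ for $\a_i\in F$ while $\langle\bu,\a_i\rangle<c$ for $\a_i\notin F$. Next, fix $\x\in(\R^*)^n$ and, for $t>0$, put $\x(t):=(t^{u_1}x_1,\ldots,t^{u_n}x_n)\in(\R^*)^n$. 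Substituting into $f_d$ and dividing by $t^c$ yields
\[
t^{-c}f_d(\x(t))=f_d|_F(\x)+\sum_{\a_i\notin F}c_i\,t^{\langle\bu,\a_i\rangle-c}\x^{\a_i}.
\]
Each exponent $\langle\bu,\a_i\rangle-c$ occurring in the sum is negative, so letting $t\to+\infty$ makes the sum vanish, whence $t^{-c}f_d(\x(t))\to f_d|_F(\x)$; as $f_d\ge0$ and $t^{-c}>0$, the left-hand side is nonnegative for all $t$, and so $f_d|_F(\x)\ge0$. Finally, since $(\R^*)^n$ is dense in $\R^n$ and $f_d|_F$ is continuous, $f_d|_F\in\PSD$.

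I do not expect a real obstacle here: the ``only if'' direction is just the classical fact that a nonnegative polynomial restricts to a nonnegative polynomial on every face of its Newton polytope, in the special case of a single negative term. The only points requiring some care are choosing the functional $\bu$ that cuts out $F$ (extending one from the affine hull of $\New(f_d)$ if that hull is not all of $\R^n$), observing that $\x\mapsto\x(t)$ keeps the sign pattern of $\x$ and hence never leaves the set $(\R^*)^n\subseteq\R^n$ on which $f_d\ge0$ is known, and the density/continuity step needed to pass from $(\R^*)^n$ back to $\R^n$ --- this last step is genuinely needed, since $\x^{\b}$ may take both signs. If one wants $\x(t)$ to be an honest monomial curve, one may choose $\bu\in\Z^n$, which is possible because $\New(f_d)$ is a rational polytope.
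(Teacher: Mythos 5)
Your proof is correct and follows essentially the same route as the paper: the ``if'' direction is the identical observation that $f_d-f_d|_F$ is a sum of monomial squares, and for the ``only if'' direction the paper simply cites Reznick's face-restriction theorem (\cite[Theorem 3.6]{re1}), for which your rescaling argument along an exposing functional $\bu$ is precisely the standard proof. The points you flag (existence of $\bu$, sign-preservation of $\x\mapsto\x(t)$, and the density/continuity step to pass from $(\R^*)^n$ to $\R^n$) are all handled correctly.
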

\begin{proof}
The necessity follows from \cite[Theorem 3.6]{re1}. For the sufficiency, note that the restriction to the face $F$ contains the term $-d\x^{\b}$ and this restriction is nonnegative. Moreover, all other terms in $f_d$ are monomial squares. Hence $f_d$ is nonnegative.
\end{proof}

From now on, we assume $\b\in\New(f_d)^{\circ}$. Without loss of generality, we further make the assumption that the Newton polytope of $f_d$ is full-dimensional, i.e. $\dim(\New(f_d))=n$. Otherwise, we can reduce to this case by applying an appropriate monomial transformation to $f_d$ (c.f. \cite{pantea}).

To begin with, we give a characterization for $f_d\in\PSD$ as well as the positive zeros of $f_d$ in a similar manner as Theorem \ref{nc-thm1} and Proposition \ref{nc-prop1}. It turns out that $f_d$ behaves just like a circuit polynomial.

It is easy to see that the set $\{d\in\R\mid f_d\in\PSD\}$ is nonempty and has upper bounds. So the supremum exists. Let
\begin{equation}\label{npgp-eq1}
d^*\triangleq\sup\{d\in\R\mid f_d\in\PSD\}.
\end{equation}
The quantity $d^*$ is an analogy of the circuit number in the situation of $f_d$.

\begin{proposition}\label{npgp-thm3}
Let $f_d=\sum_{i=1}^mc_i \x^{\a_i}-d\x^{\b}\in\R[\x]$ with $\a_i\in(2\N)^n,c_i\in\R_+,i=1,\ldots,m$ such that $\b\in\New(f_d)^{\circ}$, $\dim(\New(f_d))=n$, and let $d^*$ be defined as (\ref{npgp-eq1}). Then $f_d\in\PSD$ if and only if either $\b\in(2\N)^n$ and $d\le d^*$ or $\b\notin(2\N)^n$ and $|d|\le d^*$. Moreover, $f_{d^*}$ has exactly one zero in $\R_+^{n}$.
\end{proposition}
\begin{proof}
First, if $\b\in(2\N)^n$ and $d\le0$, then $f_d$ is obviously nonnegative since it is a sum of monomial squares. If $\b\notin(2\N)^n$ and $d\le0$, then $f_d$ is nonnegative if and only if $f_{-d}$ is nonnegative. Thus without loss of generality, we may assume $d>0$. Since the only negative term of $f_d$ is $-d\x^{\b}$, $f_d$ is nonnegative over $\R^n$ if and only if $f_d$ is nonnegative over $\R_+^n$. Therefore, by the definition of $d^*$, $f_d\in\PSD$ if and only if $d\le d^*$. 

To prove the second statement, consider $f'=\sum_{i=1}^m c_i\x^{\a_i-\b}$. It is easy to see that $d^*=\inf_{\x\in\R^n_+}f'$. Because $\dim(\New(f_d))=n$ and $\b\in\New(f_d)^{\circ}$, we have $\dim(\Conv(\{\a_1-\b,\ldots,\a_m-\b\}))=n$ and $\mathbf{0}\in\Cone(\{\a_1-\b,\ldots,\a_m-\b\})^{\circ}$. Therefore by Theorem 3.4 in \cite{pantea}, $f'$ attains its minimum over $\R^n_+$ at a unique minimizer. Since the minimizers of $f'$ over $\R^n_+$ coincide with the zeros of $f_{d^*}$ in $\R^n_+$, it follows that $f_{d^*}$ has exactly one zero in $\R_+^{n}$.
\end{proof}

For a nonnegative $f_d=\sum_{i=1}^mc_i\x^{\a_i}-d\x^{\b}$ with $\b\in\New(f_d)^{\circ}$, let $\C$ be the set of all circuits $\A\cup\{\b\}$ with $\A\subseteq\{\a_1,\ldots,\a_m\}$. In the rest of this section, we prove that $f_d$ decomposes into a sum of nonnegative circuit polynomials that are supported on circuits in $\C$. We first consider the decomposition of $f_{d^*}$ and then get the decomposition of $f_d$ from that of $f_{d^*}$. By using undetermined coefficients, the existence of such a decomposition of $f_{d^*}$ is reduced to the existence of a nonnegative solution for a particular linear system, which can be further reduced to the existence of a nonnegative solution for a tuple of subsystems by virtue of the following result, known as Helly's theorem.

\begin{theorem}[Helly, \cite{dgk}]\label{npgp-thm5}
Let $X_1,\ldots,X_r$ be a finite collection of convex subsets of $\R^s$ with $r>s$. If the intersection of every $s+1$ of these sets is nonempty, then the whole collection has a nonempty intersection.
\end{theorem}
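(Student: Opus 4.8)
The plan is to deduce Helly's theorem from Radon's partition theorem by induction on the number $r$ of sets. First I would record Radon's theorem: any $n+2$ points $y_1,\ldots,y_{n+2}$ in $\R^n$ admit a partition of $\{1,\ldots,n+2\}$ into disjoint sets $I$ and $J$ with $\Conv(\{y_i\mid i\in I\})\cap\Conv(\{y_j\mid j\in J\})\ne\emptyset$. This is pure linear algebra: the homogeneous system $\sum_{i=1}^{n+2}\mu_iy_i=\mathbf{0}$ together with $\sum_{i=1}^{n+2}\mu_i=0$ consists of $n+1$ scalar equations in $n+2$ unknowns, so it has a nontrivial solution $(\mu_i)$; putting $I=\{i\mid\mu_i>0\}$, $J=\{i\mid\mu_i\le0\}$ and $S=\sum_{i\in I}\mu_i>0$, the point $\sum_{i\in I}(\mu_i/S)y_i=\sum_{j\in J}(-\mu_j/S)y_j$ lies in both convex hulls.

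With Radon available, I would prove Helly by induction on $r$. The base case $r=n+1$ is exactly the hypothesis. For the inductive step, suppose $r\ge n+2$ and the statement holds for collections of $r-1$ sets. For each $k\in\{1,\ldots,r\}$, the $r-1\ge n+1$ sets $\{X_i\mid i\ne k\}$ still have the property that every $n+1$ of them meet, so by the induction hypothesis there is a point $p_k\in\bigcap_{i\ne k}X_i$. Apply Radon's theorem to the $r\ge n+2$ points $p_1,\ldots,p_r$ to obtain a partition $\{1,\ldots,r\}=I\sqcup J$ and a point $p\in\Conv(\{p_i\mid i\in I\})\cap\Conv(\{p_j\mid j\in J\})$.

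It remains to verify $p\in\bigcap_{i=1}^rX_i$. Fix an index $\ell$; it belongs to exactly one of $I,J$, say $\ell\in J$. Then every $i\in I$ satisfies $i\ne\ell$, so $p_i\in X_\ell$ by construction; since $X_\ell$ is convex we get $\Conv(\{p_i\mid i\in I\})\subseteq X_\ell$ and hence $p\in X_\ell$. As $\ell$ was arbitrary, $p$ lies in every $X_i$, completing the induction.

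I do not expect a genuine obstacle here: the proof is standard, and the only points needing care are the index bookkeeping in the inductive step (omitting one set from a collection of $r\ge n+2$ leaves at least $n+1$ sets, so the induction hypothesis applies, and Radon is invoked on exactly $r\ge n+2$ points) and the single use of convexity at the end, which converts ``all $p_i$ with $i\in I$ lie in $X_\ell$'' into ``$p$ lies in $X_\ell$''.
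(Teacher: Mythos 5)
Your proof is correct. Note, however, that the paper does not prove this statement at all: Helly's theorem is quoted as a known result with a citation to Danzer--Gr\"unbaum--Klee \cite{dgk} and is used as a black box (via Corollary \ref{npgp-cor1}) in the proof of Lemma \ref{npgp-lm1}. What you have supplied is the standard textbook derivation of Helly from Radon, and both halves check out: the linear-algebra argument for Radon is sound, and the induction on $r$ correctly uses that deleting one set from a collection of $r\ge n+2$ leaves at least $n+1$ sets, so the inductive hypothesis produces the points $p_k$, after which Radon plus convexity finishes the argument exactly as you describe. The only step you assert without justification is that $S=\sum_{i\in I}\mu_i>0$, which requires $I\ne\emptyset$; this follows because a nontrivial solution $(\mu_i)$ with $\sum_i\mu_i=0$ cannot have all $\mu_i\le 0$ (they would then all vanish), and symmetrically $J$ contains some index with $\mu_j<0$, so both convex hulls are nonempty. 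With that one-line addition your argument is complete and self-contained, which is more than the paper offers for this particular statement.
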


\begin{remark}\label{helly-rmk}
In Theorem \ref{npgp-thm5}, if $r>s+1$, then the condition that the intersection of every $s+1$ of these sets is nonempty can be replaced by the condition that the intersection of every $r-1$ of these sets is nonempty since the latter obviously implies the former.
\end{remark}

Next using Helly's theorem, we prove a result concerning the existence of nonnegative solutions to a particular class of linear systems for later use, which might be also of independent interest.

\begin{lemma}\label{npgp-lm1}
Let $A=[a_{ij}]\in\R^{m\times r}$, $\bb=[b_i]\in\R^m$ and $\z=(z_1,\ldots,z_r)^\intercal$ be a set of variables. For each $j$, let $A_j$ be the submatrix by deleting all of the $i$-th rows with $a_{ij}\ne0$ and the $j$-th column from $A$ such that $A_j\bar{\z}_j=\bar{\bb}_j$ is the subsystem of $A\z=\bb$ after removing the equations involving the variable $z_j$, where $\bar{\z}_j=\z\backslash z_j$, \revision{$\bar{\bb}_j=[b_i]_{i\textrm{ with }a_{ij}=0}$ (namely, removing the entries with $a_{ij}\neq0$ from $\bb$)}. Assume that $A\z=\bb$ is consistent, $\rank(A)>1$ and $\rank(A_j)=\rank(A)-1$ for all $j$. Then $A\z=\bb$ has a nonnegative solution if and only if $A_j\bar{\z}_j=\bar{\bb}_j$ has a nonnegative solution for $j=1,\ldots,r$.
\end{lemma}
\begin{proof}
Let $t=\rank(A)>1$. Then the system of linear equations $A\z=\bb$ has $r-t$ free variables. Without loss of generality, let the $r-t$ free variables be $\{z_1,\ldots,z_{r-t}\}$. So we can solve for $\{z_{r-t+1},\ldots,z_r\}$ from $A\z=\bb$ and assume $z_{i}=h_{i}(z_1,\ldots,z_{r-t})$ for $i=r-t+1,\ldots,r$. Then $A\z=\bb$ has a nonnegative solution if and only if the set
\revision{\begin{align}\label{npgp-eq6}
\begin{split}
\{(z_1,\ldots,z_{r-t})\in\R^{r-t}\mid &z_i\ge0\textrm{ for }i\in[r-t],\\
&z_{i}=h_{i}(z_1,\ldots,z_{r-t})\ge0\textrm{ for }i\in\{r-t+1,\ldots,r\}\}
\end{split}
\end{align}}
is nonempty. \revision{Define $X_i:=\{(z_1,\ldots,z_{r-t})\in\R^{r-t}\mid z_i\ge0\}$ for $i=1,\ldots,r-t$ and $X_i:=\{(z_1,\ldots,z_{r-t})\in\R^{r-t}\mid h_{i}(z_1,\ldots,z_{r-t})\ge0\}$ for $i=r-t+1,\ldots,r$, which are all convex subsets of $\R^{r-t}$. Therefore by Theorem \ref{npgp-thm5} as well as Remark \ref{helly-rmk}, the intersection of all $X_i$, i.e. \eqref{npgp-eq6}, is nonempty if and only if the intersection of every $r-1$ of these sets is nonempty, that is},
the set
\revision{\begin{align}\label{npgp-eq7}
\begin{split}
\{(z_1,\ldots,z_{r-t})\in\R^{r-t}\mid &z_i\ge0\textrm{ for }i\in[r-t]\backslash\{j\},\\
&z_{i}=h_{i}(z_1,\ldots,z_{r-t})\ge0\textrm{ for }i\in\{r-t+1,\ldots,r\}\}
\end{split}
\end{align}}
is nonempty for $j=1,\ldots,r-t$ and the set
\revision{\begin{align}\label{npgp-eq8}
\begin{split}
\{(z_1,\ldots,z_{r-t})\in\R^{r-t}\mid &z_i\ge0\textrm{ for }i\in[r-t],\\
&z_{i}=h_{i}(z_1,\ldots,z_{r-t})\ge0\textrm{ for }i\in\{r-t+1,\ldots,r\}\backslash\{j\}\}
\end{split}
\end{align}}
is nonempty for $j=r-t+1,\ldots,r$.

For $j=1,\ldots,r-t$, (\ref{npgp-eq7}) is nonempty if and only if $A\z=\bb$ has a solution with $\bar{\z}_j\in\R_{\ge0}^{r-1}$ and $z_j\in\R$, which is equivalent to the condition that $A_j\bar{\z}_j=\bar{\bb}_j$ has a nonnegative solution since $\rank(A_j)=\rank(A)-1$. For $j=r-t+1,\ldots,r$, (\ref{npgp-eq8}) is nonempty if and only if $A\z=\bb$ has a solution with $\bar{\z}_j\in\R_{\ge0}^{r-1}$ and $z_j\in\R$, which is also equivalent to the condition that $A_{j}\bar{\z}_{j}=\bar{\bb}_{j}$ has a nonnegative solution since $\rank(A_{j})=\rank(A)-1$. Put all above together and we deduce that $A\z=\bb$ has a nonnegative solution if and only if $A_j\bar{\z}_j=\bar{\bb}_j$ has a nonnegative solution for $j=1,\ldots,r$ as desired.
\end{proof}

\revision{
\begin{example}
Consider the linear system $S=\{z_1+z_2=1,z_3+z_4=2,z_2+z_3=1,z_1+z_4=2,z_1+z_2+z_3+z_4=3\}$. It is easy to check that $S$ satisfies the hypotheses of Lemma \ref{npgp-lm1}. Clearly all subsystems $\{z_3+z_4=2,z_2+z_3=1\},\{z_3+z_4=2,z_1+z_4=2\},\{z_1+z_2=1,z_1+z_4=2\},\{z_1+z_2=1,z_2+z_3=1\}$ admit a nonnegative solution. Thus by Lemma \ref{npgp-lm1} we conclude that $S$ has a nonnegative solution.
\end{example}}

Lemma \ref{npgp-lm1} assumes the consistentness of $A\z=\bb$. It is known that the system of linear equations $A\z=\bb$ is consistent if and only if $\bb$ belongs to the image of $A$. For later use, we give a more concrete description concerning the consistentness of $A\z=\bb$ here, whose correctness is obvious and thus we omit the proof.
\revision{\begin{lemma}\label{npgp-lm2}
Let $A=[a_{ij}]\in\R^{m\times r}$, $\bb=[b_j]\in\R^m$, and $\z=(z_1,\ldots,z_r)^\intercal$ be a set of variables. Assume that the row vectors of the matrix $C$ span the kernel of $A^\intercal$. Then $A\z=\bb$ is consistent if and only if $C\bb=\mathbf{0}$.
\end{lemma}}



Now we are ready to prove $f_{d^*}\in\SONC$.
\begin{lemma}\label{npgp-thm4}
Let $f_d=\sum_{i=1}^mc_i\x^{\a_i}-d\x^{\b}\in\R[\x]$ with $\a_i\in(2\N)^n,c_i\in\R_+,i=1,\ldots,m$ such that $\b\in\New(f_d)^{\circ}$, $\dim(\New(f_d))=n$, and let $d^*$ be defined as (\ref{npgp-eq1}). Then $f_{d^*}\in\SONC$.
\end{lemma}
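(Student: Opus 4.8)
The plan is to show that the extremal polynomial $f_{d^*}$, which by Theorem \ref{npgp-thm3} has a unique zero $\x_*\in\R_+^n$, can be written as a nonnegative combination of circuit polynomials each supported on a subset of $\supp(f_{d^*})$. The starting point is that $\x_*$ is simultaneously a zero and a critical point of $f_{d^*}$, so it satisfies the system \eqref{sec1-eq5}. The idea is to \emph{decompose} this single witness of tightness into several ``local'' witnesses, one for each simplex spanned by a choice of the exponents $\a_i$ whose convex hull still contains $\b$ in its relative interior (relative to an affine subspace), and to pair each such simplex with an appropriate slice of the coefficient mass $c_i$ so that the resulting circuit polynomial is exactly nonnegative at $\x_*$.

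Concretely, first I would translate the problem into linear algebra. Write a candidate decomposition $f_{d^*}=\sum_{k} g_k$ where each $g_k=\sum_{i\in S_k}c_i^{(k)}\x^{\a_i}-d_k\x^{\b}$ is a circuit polynomial with $\{\a_i : i\in S_k\}$ a trellis containing $\b$ in its interior; matching coefficients of $\x^{\a_i}$ and of $\x^{\b}$ across all $k$ produces a linear system $A\z=\bb$ in the unknowns $c_i^{(k)}$ and $d_k$, where $\bb$ encodes the coefficients $c_i$ and $d^*$. The conditions we need are: the $c_i^{(k)}$ are nonnegative, the $d_k$ are (say) nonnegative, and each $g_k$ is \emph{nonnegative}, i.e. $d_k\le\Theta_{g_k}$ (or $|d_k|\le\Theta_{g_k}$ when $\b\notin(2\N)^n$). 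The crux is that at the extremal value these last inequalities should be forced to be equalities: if some $g_k$ were strictly nonnegative at $\x_*$, one could perturb to increase $d^*$. This is where Lemma \ref{npgp-lm1} and Lemma \ref{npgp-lm2} enter — they let us reduce the solvability-with-nonnegativity question for $A\z=\bb$ to the same question for all the ``deletion'' subsystems $A_j\bar\z_j=\bar\bb_j$, setting up an induction on the number $m$ of positive terms (equivalently on $\dim\New(f_d)$ and the number of vertices), with the simplex case being the base case handled by Theorem \ref{nc-thm2} together with Theorem \ref{nc-thm1}.

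The key step I expect to be the main obstacle is verifying that the extremal condition genuinely propagates to the pieces: namely, that one can choose the $\lambda$-weights and the coefficient splitting so that $d_k=\Theta_{g_k}$ for every $k$ \emph{and} so that all $g_k$ share the common zero $\x_*$ (this is exactly the content one needs so that summing the pieces reproduces $f_{d^*}$ with no slack). Proposition \ref{nc-prop1} will be the right tool here: it pins down the unique positive zero of a tight circuit polynomial via the proportionality relations $c_i^{(k)}\x_*^{\a_i}/\lambda_i^{(k)}=\Theta_{g_k}\x_*^{\b}$, and I would use $\x_*$ from Theorem \ref{npgp-thm3} to \emph{define} the $\lambda^{(k)}$ and hence the coefficient split, then check consistency (the $\lambda^{(k)}$ sum correctly and the leftover coefficient mass is nonnegative) using that $\x_*$ solves the critical-point equation $\sum_i c_i\a_i\x_*^{\a_i}=d^*\b\x_*^{\b}$, which says precisely that the vector of ``energies'' $c_i\x_*^{\a_i}$ induces a convex combination expressing $\b$. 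Triangulating that convex combination (Carathéodory, or a flag of simplices) yields the pieces, and Helly/Corollary \ref{npgp-cor1} via Lemma \ref{npgp-lm1} guarantees the bookkeeping of nonnegativity closes up across all simultaneous constraints. The remaining technical point — that $\dim\New(f_d)=n$ and $\b\in\New(f_d)^\circ$ let us realize enough simplices with $\b$ interior — follows from elementary polytope combinatorics.
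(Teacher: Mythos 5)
Your overall strategy coincides with the paper's: use the unique positive zero $\x_*$ of $f_{d^*}$ and the critical-point equations from Theorem \ref{npgp-thm3}, set up a linear system matching the coefficients $c_i$ and $d^*$ against circuit polynomials supported on the simplices with vertices in $\{\a_1,\ldots,\a_m\}$ containing $\b$ in their interior, force each piece to be tight at $\x_*$ via the proportionality relations of Proposition \ref{nc-prop1}, establish mere solvability from Lemma \ref{npgp-lm2} plus the critical-point equations, and establish nonnegative solvability via Lemma \ref{npgp-lm1}. That is exactly the paper's skeleton.

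The gap is in how you propose to close the last step. You claim that Lemma \ref{npgp-lm1} ``sets up an induction on the number $m$ of positive terms \dots with the simplex case being the base case handled by Theorem \ref{nc-thm2}.'' This does not work as stated: the deletion subsystems $A_j\bar\z_j=\bar\bb_j$ are obtained by discarding the variable $s_j$ and the equations for the indices $i\in I_j$, so they are \emph{not} instances of the original decomposition problem for a polynomial with fewer monomials, and there is no base case reached by iterating the reduction (Lemma \ref{npgp-lm1} is applied only once). The paper instead \emph{exhibits} an explicit nonnegative solution of each subsystem: for each $i\notin I_j$ one chooses a facet $F$ of $\Delta_j$ with $\b\in\Conv(V(F)\cup\{\a_i\})^{\circ}$, routes the entire mass $c_i\x_*^{\a_i}$ through that single simplex $\Delta_{p_i}$, and sets all remaining $s_k$ to zero. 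The same construction is what verifies the rank hypothesis $\rank(A_j)=\rank(A)-1$ of Lemma \ref{npgp-lm1}, a condition you do not address at all but without which the lemma cannot be invoked. Finally, your parenthetical allowance of simplices containing $\b$ only in a \emph{relative} interior would break the count $|I_k|=n+1$ used in the rank computations; the paper restricts to full-dimensional simplices and handles the degenerate configuration (where $\b$ lies on a hyperplane spanned by some of the $\a_i$) by perturbing $\b$ and passing to the limit, a step missing from your plan.
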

\begin{proof}
If $m=n+1$, then $f_d$ is a circuit polynomial and clearly $f_{d^*}\in\SONC$. From now on, we assume $m>n+1$. By Proposition \ref{npgp-thm3}, $f_{d^*}$ has exactly one zero in $\R_+^{n}$, which is denoted by $\x_*$. Let
$$\{\Delta_1,\ldots,\Delta_r\}:=\{\Delta\mid\Delta\textrm{ is a simplex }, \b\in\Delta^{\circ}, V(\Delta)\subseteq\{\a_1,\ldots,\a_m\}\}$$
and $I_k:=\{i\in[m]\mid\a_i\in V(\Delta_k)\}$ for $k=1,\ldots,r$.
\revision{We complete the proof by constructing a SONC decomposition supported on the simplices $\{\Delta_k\}_k$ for $f_{d^*}$.} 

Firstly, we assume $\dim(\Delta_k)=n$ so that $|I_k|=n+1$ for $k=1,\ldots,r$. For each $\Delta_k$, since $\b\in\Delta_k^{\circ}$, we can write $\b=\sum_{i\in I_k}\lambda_{ik}\a_i$, where $\sum_{i\in I_k}\lambda_{ik}=1, \lambda_{ik}>0, i\in I_k$. Inspired by Proposition \ref{nc-prop1} and using undetermined coefficients, we may consider the following system of linear equations in variables $\{c_{ik}\}_{i,k}$ and $\{s_k\}_k$:
\begin{equation}\label{npgp-eq2}
\begin{cases}
\frac{c_{ik}\x_*^{\a_i}}{\lambda_{ik}}=s_k, &\textrm{for }i\in I_k,k=1,\ldots,r,\\
\sum_{k\in[r]\textrm{ with }i\in I_k} c_{ik}=c_i, &\textrm{for }i=1,\ldots,m.
\end{cases}
\end{equation}
Eliminate the variables $\{c_{ik}\}_{i,k}$ from (\ref{npgp-eq2}) and we obtain:
\begin{equation}\label{npgp-eq3}
\sum_{k\in[r]\textrm{ with }i\in I_k}\lambda_{ik}s_k=c_i\x_*^{\a_i}, \quad \textrm{for }i=1,\ldots,m.
\end{equation}
If \eqref{npgp-eq3} has a nonnegative solution, then we can retrieve a SONC decomposition for $f_{d^*}$ from the nonnegative solution as follows. Assume that $\{s_1^*,\ldots,s_r^*\}$ is a nonnegative solution to \eqref{npgp-eq3}. Substitute $\{s_1^*,\ldots,s_r^*\}$ into the system of equations \eqref{npgp-eq2}, and we have $c_{ik}=\lambda_{ik}s_k^*/\x_*^{\a_i}$ for $i\in I_k,k=1,\ldots,r$. Let $d_k=s_k^*/\x_*^{\b}$ and $f_k=\sum_{i\in I_k}c_{ik}\x^{\a_i}-d_k\x^{\b}$ for $k=1,\ldots,r$. Then by (\ref{npgp-eq2}) and by Proposition \ref{nc-prop1}, $d_{k}$ is the circuit number of $f_{k}$ and hence $f_k$ is a nonnegative circuit polynomial for all $k$. By (\ref{npgp-eq2}), $\sum_{k=1}^rd_k\x_*^{\b}=\sum_{k=1}^r\sum_{i\in I_k}c_{ik}\x^{\a_i}_*=\sum_{i=1}^mc_i \x^{\a_i}_*=d^*\x_*^{\b}$, which implies $\sum_{k=1}^rd_k=d^*$. It follows $f_{d^*}=\sum_{k=1}^rf_k\in\SONC$ as desired. \revision{So our remaining task is to prove that \eqref{npgp-eq3} has a nonnegative solution.}

{\bf Claim}: The linear system (\ref{npgp-eq3}) in variables $\{s_1,\ldots,s_r\}$ has a nonnegative solution.

{\em Proof of the claim}. Denote the coefficient matrix of (\ref{npgp-eq3}) by \revision{$A=[a_{ik}]\in\R^{m\times r}$ (satisfying $a_{ik}=\lambda_{ik}$ if $i\in I_k$ and $a_{ik}=0$ otherwise)} and denote the coefficient matrix of
\begin{equation}\label{npgp-eq4}
\sum_{k\in[r]\textrm{ with }i\in I_k}\lambda_{ik}s_k=c_i\x_*^{\a_i},\quad \textrm{for } i\in[m]\backslash I_j
\end{equation}
by $A_j$ for each $j\in[r]$. Note that \eqref{npgp-eq4} is obtained from \eqref{npgp-eq3} by removing the equations involving the variable $s_j$.
In order to invoke Lemma \ref{npgp-lm1} to prove that (\ref{npgp-eq3}) has a nonnegative solution, we need to check the following hypotheses:
\begin{enumerate}
    \item $\rank(A)>1$;
    \item $\rank(A_j)=\rank(A)-1$ for each $j\in[r]$;
    \item (\ref{npgp-eq3}) is consistent.
\end{enumerate}

\revision{Fix $j\in[r]$. For every $i\in[m]\backslash I_j$, since $\b\in\Delta_j^{\circ}$, there exists a facet $F$ of $\Delta_j$ such that $\b\in\Conv(V(F)\cup\{\a_i\})^{\circ}$. Let $\Conv(V(F)\cup\{\a_i\})=\Delta_{p_i}$ for some $p_i\in[r]$ (see Figure \ref{fg1}). It is easy to see that $p_{i_1}\ne p_{i_2}$ whenever $i_1\ne i_2$.
\begin{figure}
\caption{Illustration for the correspondence between $\a_i$ and $\Delta_{p_i}$ for $i\in[m]\backslash I_j$}\label{fg1}
\begin{center}
\begin{tikzpicture}
\fill (90:2) circle (2pt);
\fill (162:2) circle (2pt);
\fill (234:2) circle (2pt);
\fill (306:2) circle (2pt);
\fill (18:2) circle (2pt);
\fill (260:1.3) circle (2pt);
\node[above left] (1) at (162:2) {$\a_i$};
\node (2) at (90:0) {$\Delta_j$};
\node[left] (3) at (245:0.9) {$\Delta_{p_i}$};
\node[right] (4) at (260:1.3) {$\b$};
\node[below] (5) at (270:1.6) {$F$};
\draw (90:2)--(162:2);
\draw (90:2)--(234:2);
\draw (90:2)--(306:2);
\draw (162:2)--(306:2);
\draw (162:2)--(234:2);
\draw (234:2)--(306:2);
\draw (306:2)--(18:2);
\draw (18:2)--(90:2);
\fill[fill=green,fill opacity=0.3] (90:2)--(234:2)--(306:2);
\fill[fill=blue,fill opacity=0.3] (306:2)--(162:2)--(234:2);
\end{tikzpicture}
\end{center}
\end{figure} 
For every $k\in[r]\backslash(\{j\}\cup\{p_i\mid i\in[m]\backslash I_j\})$, let $s_k=0$ in (\ref{npgp-eq4}) and then by construction we obtain:
\begin{equation}\label{npgp-eq5}
\lambda_{ip_i}s_{p_i}=c_i\x_*^{\a_i},\quad \textrm{for } i\in[m]\backslash I_j.
\end{equation}
It follows that $\rank(A_j)=m-|I_j|=m-(n+1)$ and furthermore, $\rank(A)\ge\rank(A_j)+1=m-n$, $\rank(\ker(A^\intercal))=m-\rank(A)\le n$. Let $C:=[\a_1-\b,\ldots,\a_m-\b]$. Then, 
\begin{align*}
    CA&=[\sum_{i=1}^m(\a_i-\b)a_{i1},\ldots,\sum_{i=1}^m(\a_i-\b)a_{ir}]\\
    &=[\sum_{i\in I_1}(\a_i-\b)\lambda_{i1},\ldots,\sum_{i\in I_r}(\a_i-\b)\lambda_{ir}]\\
    &=[\sum_{i\in I_1}\lambda_{i1}\a_i-\b,\ldots,\sum_{i\in I_r}\lambda_{ir}\a_i-\b]=[\mathbf{0},\ldots,\mathbf{0}].
\end{align*}
So the row vectors of $C$ belong to the kernel of $A^\intercal$. Because $\b\in\New(f_d)^{\circ}$ and $\dim(\New(f_d))=n$, $\rank(C)=\rank(\{\a_i-\b\}_{i=1}^m)=n$. As $\rank(\ker(A^\intercal))\le n$, we then conclude that $\rank(\ker(A^\intercal))=n$ and the row vectors of $C$ span the kernel of $A^\intercal$. As a result, $\rank(A)=m-n>1$ and $\rank(A_j)=\rank(A)-1$.}
Because the zero $\x_*$ is also a minimizer of $f_{d^*}$, it satisfies $\{f_{d^*}(\x_*)=0,\nabla f_{d^*}(\x_*)=\mathbf{0}\}$ ($\nabla$ denotes the gradient with respect to $\x$) which gives
\begin{equation}\label{sec1-eq5}
\begin{cases}
\sum_{i=1}^m c_i\x_*^{\a_i}-d^*\x_*^{\b}=0,\\
\sum_{i=1}^m c_i\a_i\x_*^{\a_i}-d^*\b\x_*^{\b}=\mathbf{0}.
\end{cases}
\end{equation}
It follows $\sum_{i=1}^m c_i(\a_i-\b)\x_*^{\a_i}=\mathbf{0}$, i.e. $C\cdot[c_1\x_*^{\a_1},\ldots,c_m\x_*^{\a_m}]^{\intercal}=\mathbf{0}$. Thus by Lemma \ref{npgp-lm2}, (\ref{npgp-eq3}) is consistent. 

Now by Lemma \ref{npgp-lm1}, in order to prove the claim, we only need to show that every subsystem (\ref{npgp-eq4}) in variables $\{s_1,\ldots,s_r\}\backslash\{s_j\}$ has a nonnegative solution for $j=1,\ldots,r$. Given $j\in[r]$, from (\ref{npgp-eq5}) we figure out $s_{p_i}=c_i\x_*^{\a_i}/\lambda_{ip_i}$ for $i\in[m]\backslash I_j$. Hence
\begin{equation}
\begin{cases}
s_k=0, &\textrm{for }k\in[r]\backslash(\{j\}\cup\{p_i\mid i\in[m]\backslash I_j\}),\\
s_{p_i}=c_i\x_*^{\a_i}/\lambda_{ip_i}, &\textrm{for }i\in[m]\backslash I_j
\end{cases}
\end{equation}
is a nonnegative solution to (\ref{npgp-eq4}). So the claim is proved.

For the case that $\dim(\Delta_k)=n$ does not hold for all $k$, note that all results above remain valid for $\b\in\R^n$. We then give $\b$ a small perturbation, say $\dd$, such that $\dim(\Delta_k)=n$ holds for all $k$. Then the new linear system (\ref{npgp-eq3}) for $\b+\dd$ has a nonnegative solution. Let $\dd\to\mathbf{0}$. We obtain that (\ref{npgp-eq3}) also has a nonnegative solution for $\b$. Thus the theorem remains true in this case.
\end{proof}

\revision{We give an example to illustrate Lemma \ref{npgp-thm4}.
\begin{example}
Let $f_d=1+x^4+y^4+x^6y^4+x^4y^6-dx^2y$ and $d^*=\sup\{d\in\R_+\mid f_d\in\PSD\}$. We have $[2,1]^\intercal=\frac{1}{4}[0,0]^\intercal+\frac{1}{2}[4,0]^\intercal+\frac{1}{4}[0,4]^\intercal=\frac{1}{2}[0,0]^\intercal+\frac{1}{3}[4,0]^\intercal+\frac{1}{6}[4,6]^\intercal=\frac{5}{8}[0,0]^\intercal+\frac{1}{8}[4,0]^\intercal+\frac{1}{4}[6,4]^\intercal$.
\begin{center}
\begin{tikzpicture}
\draw (0,0)--(0,2.4);
\draw (0,0)--(2.4,0);
\draw (2.4,0)--(3.6,2.4);
\draw (0,2.4)--(2.4,3.6);
\draw (3.6,2.4)--(2.4,3.6);
\draw (0,2.4)--(2.4,0);
\draw (2.4,0)--(2.4,3.6);
\draw (0,0)--(3.6,2.4);
\draw (0,0)--(2.4,3.6);
\fill (0,0) circle (2pt);
\node[below left] (1) at (0,0) {$1$};
\fill (2.4,0) circle (2pt);
\node[below right] (2) at (2.4,0) {$x^4$};
\fill (0,2.4) circle (2pt);
\node[above left] (3) at (0,2.4) {$y^4$};
\fill (2.4,3.6) circle (2pt);
\node[above right] (4) at (2.4,3.6) {$x^4y^6$};
\fill (3.6,2.4) circle (2pt);
\node[above right] (5) at (3.6,2.4) {$x^6y^4$};
\fill (1.2,0.6) circle (2pt);
\node[below] (6) at (1.2,0.6) {$x^2y$};
\node (7) at (0.7,1) {$\Delta_1$};
\node (8) at (1.5,1.5) {$\Delta_2$};
\node (9) at (2.3,1) {$\Delta_3$};
\fill[fill=green,fill opacity=0.3] (0,0)--(0,2.4)--(2.4,0);
\fill[fill=blue,fill opacity=0.3] (0,0)--(2.4,0)--(2.4,3.6);
\fill[fill=yellow,fill opacity=0.3] (0,0)--(2.4,0)--(3.6,2.4);
\end{tikzpicture}
\end{center}
The system of equations $\{f_d=0,\nabla f_d=\mathbf{0}\}$ in variables $\{x,y,d\}$ has exactly one zero $(x_*\approx0.944112,y_*\approx0.708568,d^*\approx3.682248)$ in $\R_+^{3}$. The linear system \eqref{npgp-eq3} becomes
\begin{equation}\label{npgp-ex}
\begin{cases}
\frac{1}{4}s_{1}+\frac{1}{2}s_{2}+\frac{5}{8}s_{3}=1\\
\frac{1}{2}s_{1}+\frac{1}{3}s_{2}+\frac{1}{8}s_{3}=x_*^4\\
\frac{1}{4}s_{1}=y_*^4\\
\frac{1}{4}s_{3}=x_*^6y_*^4\\
\frac{1}{6}s_{2}=x_*^4y_*^6\\
\end{cases}
\end{equation}
which has a nonnegative solution $(s_{1}\approx1.00829,s_{2}\approx0.603299,s_{3}\approx0.714045)$. Thus from the proof of Lemma \ref{npgp-thm4}, we obtain a SONC decomposition of $f_{d^*}$ which is $f_{d^*}\approx(0.252072+0.634543x^4+y^4-1.59646x^2y)+(0.30165+0.253115x^4+x^4y^6-0.955222x^2y)+(0.446278+0.112342x^4+x^6y^4-1.13057x^2y)$.
\end{example}}

\begin{theorem}\label{npgp-thm7}
Let $f_d=\sum_{i=1}^mc_i \x^{\a_i}-d\x^{\b}\in\R[\x]$ with $\a_i\in(2\N)^n,c_i\in\R_+,i=1,\ldots,m$ such that $\b\in\New(f_d)^{\circ}$, $\dim(\New(f_d))=n$. Then $f_d\in\PSD$ if and only if $f_d\in\SONC$.
\end{theorem}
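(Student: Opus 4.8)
The plan is to reduce everything to the two structural facts already in hand: Theorem~\ref{npgp-thm3}, which turns nonnegativity of $f_d$ into a single scalar condition on $d$, and Lemma~\ref{npgp-thm4}, which provides a SONC decomposition at the critical value $d=d^*$. The inclusion $\SONC\subseteq\PSD$ is immediate because every nonnegative circuit polynomial (and every monomial square) is nonnegative, so only the implication $f_d\in\PSD\Rightarrow f_d\in\SONC$ requires an argument.

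I would first note $d^*>0$: the proof of Lemma~\ref{sec1-lm} shows that the unique positive zero of the critical system has $d$-coordinate $\sum_{i=1}^m c_i\x_*^{\a_i-\b}>0$, and by the proof of Theorem~\ref{npgp-thm3} this is exactly $d^*$. I would also record the elementary observation that the SONC property is stable under any sign change $x_j\mapsto -x_j$: such a substitution fixes every monomial $\x^{\a}$ with $\a\in(2\N)^n$ --- in particular the whole trellis of any circuit polynomial --- and at worst flips the sign of the inner coefficient, to which the nonnegativity criterion of Theorem~\ref{nc-thm1} is insensitive; hence it carries any SONC decomposition term by term to another one.

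Now assume $f_d\in\PSD$ and split on the parity of $\b$. If $\b\in(2\N)^n$ and $d\le 0$, then $f_d$ is a sum of monomial squares, hence SONC. If $\b\in(2\N)^n$ and $d>0$, then $0<d\le d^*$ by Theorem~\ref{npgp-thm3}, and
\begin{equation*}
f_d=\frac{d}{d^*}\,f_{d^*}+\Bigl(1-\frac{d}{d^*}\Bigr)\sum_{i=1}^m c_i\x^{\a_i}
\end{equation*}
exhibits $f_d$ as a nonnegative combination of $f_{d^*}\in\SONC$ (Lemma~\ref{npgp-thm4}) and a sum of monomial squares; since SONC is a convex cone, $f_d\in\SONC$. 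If $\b\notin(2\N)^n$, then $|d|\le d^*$ by Theorem~\ref{npgp-thm3}. Choosing a coordinate $j$ with $\beta_j$ odd, the sign change $x_j\mapsto -x_j$ sends $f_{d^*}$ to $f_{-d^*}=\sum_{i=1}^m c_i\x^{\a_i}+d^*\x^{\b}$, so $f_{-d^*}\in\SONC$ by the stability observation. Finally, for $d\in[-d^*,d^*]$,
\begin{equation*}
f_d=\frac{d^*+d}{2d^*}\,f_{d^*}+\frac{d^*-d}{2d^*}\,f_{-d^*}
\end{equation*}
is a nonnegative combination of two SONC polynomials, hence SONC.

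The substantive work is entirely contained in Lemma~\ref{npgp-thm4} (and Theorem~\ref{npgp-thm3}); the present argument is bookkeeping built on convexity of the SONC cone. The only steps that need a moment's care are verifying $d^*\neq 0$, so that the divisions above are legitimate, and checking that the monomial sign-change genuinely preserves the class of nonnegative circuit polynomials --- both are straightforward --- so I anticipate no real obstacle here.
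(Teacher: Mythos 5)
Your proposal is correct and follows essentially the same route as the paper: both arguments reduce the problem to Theorem \ref{npgp-thm3} and Lemma \ref{npgp-thm4} and then interpolate from the extremal value $d^*$ down to the given $d$. The only cosmetic difference is that you package the interpolation as a convex combination of $f_{d^*}$, $f_{-d^*}$ and the sum of monomial squares inside the SONC cone, whereas the paper rescales the negative term in each circuit polynomial of the decomposition of $f_{d^*}$ and invokes Theorem \ref{nc-thm1}.
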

\begin{proof}
The sufficiency is obvious. Assume that $f_d$ is nonnegative. If $\b\in(2\N)^n$ and $d<0$, or $d=0$, then $f_d$ is a sum of monomial squares and obviously $f_d\in\SONC$. If $\b\notin(2\N)^n$ and $d<0$, through a variable transformation $x_j\mapsto -x_j$ for some odd number $\beta_j$, we can always assume $d>0$. Let $d^*$ be defined as (\ref{npgp-eq1}). By Lemma \ref{npgp-thm4} and its proof, $f_{d^*}\in\SONC$ and $f_{d^*}$ admits a SONC decomposition:  $f_{d^*}=\sum_{k=1}^r(\sum_{i\in I_k}c_{ik}\x^{\a_i}-d_k\x^{\b})$, where $\sum_{i\in I_k}c_{ik}\x^{\a_i}-d_k\x^{\b}$ is a nonnegative circuit polynomial with $d_k$ the corresponding circuit number for all $k$ (the sets $I_k,k\in[r]$ are defined in the proof of Lemma \ref{npgp-thm4}). Since $f_d$ is nonnegative, it follows $d\le d^{*}$. We have $f_d=\sum_{k=1}^r(\sum_{i\in I_k}c_{ik}\x^{\a_i}-\frac{d}{d^*}d_k\x^{\b})$, where $\sum_{i\in I_k}c_{ik}\x^{\a_i}-\frac{d}{d^*}d_k\x^{\b}$ is a nonnegative circuit polynomial for all $k$ by Theorem \ref{nc-thm1}. Thus $f_d\in\SONC$.
\end{proof}

\begin{remark}
Theorem \ref{npgp-thm7} is a generalization of Theorem \ref{nc-thm2} to the case of polynomials with general Newton polytopes and with a unique negative term. We point out that a special case of Theorem \ref{npgp-thm7} concerning agiforms was proved by Reznick in 1989; see \cite[Theorem 7.1]{re1}.
\end{remark}

\revision{\begin{definition}
An {\em AGE polynomial} is a nonnegative polynomial with at most one negative term, namely, it is nonnegative and of the form
\begin{equation*}
    \sum_{i=1}^mc_i \x^{\a_i}-d\x^{\b}, \textrm{ where }\a_i\in(2\N)^n,c_i\in\R_{+},i=1,\ldots,m,
\end{equation*}
and either $\b\in\N^n\backslash(2\N)^n$, or $\b\in(2\N)^n$ and $d\ge0$.
\end{definition}}

The proof of Theorem \ref{npgp-thm7} enables us to give a SONC decomposition {\em without cancellation} for AGE polynomials.
\begin{theorem}\label{npgp-thm8}
Let $f=\sum_{i=1}^mc_i \x^{\a_i}-d\x^{\b}\in\R[\x]$ with $\a_i\in(2\N)^n,c_i\in\R_+,i=1,\ldots,m$ be an AGE polynomial. Let
$$\mathscr{F}:=\{\Delta\mid\Delta\textrm{ is a simplex }, \b\in\Delta^{\circ}, V(\Delta)\subseteq\{\a_1,\ldots,\a_m\}\}.$$
Then $f$ admits a SONC decomposition as follows:
\begin{equation}
    f=\sum_{\Delta\in\mathscr{F}}f_{\Delta}+\sum_{i\in I}c_i\x^{\a_i},
\end{equation}
where $f_{\Delta}$ is a nonnegative circuit polynomial supported on $V(\Delta)\cup\{\b\}$ for each $\Delta$ and $I=\{i\in[m]\mid\a_i\notin\bigcup_{\Delta\in\mathscr{F}}V(\Delta)\}$.
\end{theorem}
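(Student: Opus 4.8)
The easy direction $\SONC\subseteq\PSD$ holds because every nonnegative circuit polynomial is nonnegative, so I would concentrate on $\PSD\Rightarrow\SONC$ together with the claimed shape of the decomposition, the idea being to reduce everything to Theorem \ref{npgp-thm7}. First I would clear the trivial cases: if $d=0$, or $d\le 0$ and $\b\in(2\N)^n$, then $f$ is already a sum of monomial squares; if $d<0$ and $\b\notin(2\N)^n$, a change of sign $x_j\mapsto -x_j$ at some coordinate with $\beta_j$ odd reduces to $d>0$. Hence I assume $d>0$, so that $f\in\PSD$ iff $f\ge 0$ on $\R_+^n$, and (since like terms are combined) $\b\notin\{\a_1,\dots,\a_m\}$; under the running hypothesis $\b\notin V(\New(f))$ of this section, this forces $\b\in\Conv(\{\a_1,\dots,\a_m\})$.

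Next I would pass to the face of $\New(f)$ carrying $\b$. Put $F=\New(f)$ if $\b\in\New(f)^\circ$, and otherwise let $F$ be the minimal face of $\New(f)$ containing $\b$; in both cases $\b$ lies in the relative interior of $F$, and by Lemma \ref{sec3-lm} (applied when $\b\in\partial\New(f)$, and trivially otherwise) $f\in\PSD$ iff $f|_F\in\PSD$, with all terms of $f$ whose exponent lies outside $F$ being monomial squares that I set aside for now. After a unimodular monomial substitution --- which is a bijection on the exponent lattice and hence leaves $\mathscr F$, the set $I$, and the $\SONC$/$\PSD$ properties unchanged --- I may assume $f|_F$ has full-dimensional Newton polytope with $\b$ in its interior. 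Now Theorem \ref{npgp-thm7} applies and gives $f|_F\in\SONC$, hence $f\in\SONC$.

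For the explicit decomposition I would unwind the proofs of Lemma \ref{npgp-thm4} and Theorem \ref{npgp-thm7}: they produce $f|_F=\sum_\Delta f_\Delta$, the sum over all simplices $\Delta$ with $\b\in\Delta^\circ$ and $V(\Delta)\subseteq\{\a_i:\a_i\in F\}$, with $f_\Delta$ a nonnegative circuit polynomial supported on $V(\Delta)\cup\{\b\}$; adding back the monomial squares removed above yields $f=\sum_{\Delta\in\mathscr F}f_\Delta+\sum_{i\in I}c_i\x^{\a_i}$ with $I=\{i:\a_i\notin F\}$. To see that this matches the statement I would prove two combinatorial facts. (i) Every simplex $\Delta$ with $\b\in\Delta^\circ$ and $V(\Delta)\subseteq\{\a_1,\dots,\a_m\}$ is contained in $F$: writing $\b$ as a strictly positive convex combination of $V(\Delta)$ and using that a face of a polytope meeting the relative interior of a segment contains both endpoints, one peels off the vertices of $\Delta$ one at a time and finds them all in $F$; hence $\mathscr F$ is exactly the family appearing above. (ii) Conversely every $\a_i\in F$ is a vertex of some $\Delta\in\mathscr F$. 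Together these give $\bigcup_{\Delta\in\mathscr F}V(\Delta)=\{\a_i:\a_i\in F\}$, so $I=\{i:\a_i\notin\bigcup_{\Delta\in\mathscr F}V(\Delta)\}$ as required.

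I expect fact (ii) to be the crux, since it is what forces the decomposition to use precisely the monomials advertised in $I$ and nothing more. The argument I would use: since $\b$ is a relative interior point of $F=\Conv(\{\a_i:\a_i\in F\})$, there is a representation $\b=\sum_{\a_i\in F}\mu_i\a_i$ with every $\mu_i>0$; as long as the $\a_i$ in play are affinely dependent, pick an affine dependence $\sum\nu_i\a_i=0$, $\sum\nu_i=0$, and move along $\mu_i\mapsto\mu_i+t\nu_i$ in the direction that makes the coefficient of the chosen $\a_{i_0}$ grow, stopping when another coefficient first hits $0$; this strictly shrinks the support without ever eliminating $\a_{i_0}$, so the process terminates at an affinely independent subset containing $\a_{i_0}$, i.e.\ a simplex of $\mathscr F$ with $\a_{i_0}$ among its vertices. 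Everything else --- the reductions to $d>0$ and to the face $F$, the existence of the unimodular substitution (Smith normal form) and its harmlessness, and reading off the decomposition from the proofs of Lemma \ref{npgp-thm4} and Theorem \ref{npgp-thm7} --- is routine.
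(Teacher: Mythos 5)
Your proof is correct and follows essentially the same route as the paper, whose own proof is a one-line reduction via Lemma \ref{sec3-lm} (restriction to the face containing $\b$) and Theorem \ref{npgp-thm7} (the full-dimensional interior case). You simply make explicit the details the paper leaves implicit --- the trivial sign cases, the unimodular reduction to full dimension, and in particular the Carath\'eodory-type facts (i) and (ii) identifying $\bigcup_{\Delta\in\mathscr F}V(\Delta)$ with the support points lying on the face $F$, which is exactly what is needed to justify the advertised index set $I$.
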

\begin{proof}
It follows easily from Lemma \ref{sec3-lm} and the proof of Theorem \ref{npgp-thm7}.
\end{proof}

Murray, Chandrasekaran and Wiermann proposed the {\em sum of AGE polynomials} (SAGE) as a new nonnegativity certificate of polynomials in \cite{murray}, where they considered not only polynomial nonnegativity but also signomial nonnegativity. Nonnegative polynomials that admit a SAGE decomposition are called {\em SAGE polynomials}. The cone containing all SAGE polynomials is called the {\em SAGE cone}.
Due to Theorem \ref{npgp-thm8}, we immediately obtain the following result.
\begin{corollary}
The SONC cone coincides with the SAGE cone.
\end{corollary}

The coincidence of the SONC cone and the SAGE cone was also independently proved in \cite{murray} by showing that any extreme ray of the SAGE cone is a nonnegative circuit polynomial \revision{\cite[Corollary 21]{murray}. The proof in \cite{murray} was provided in the context of signomials and stems from convex duality. In contrast, our proof uses algebraic techniques and exploits combinatorical structure of the polynomial support in an essential way.}

\section{Nonnegative polynomials with multiple negative terms}\label{sec4}
In this section, we deal with the case of nonnegative polynomials with multiple negative terms. We will provide sufficient conditions under which a nonnegative polynomial with multiple negative terms admits a SONC decomposition. The proof proceeds in a similar manner as the proof of Theorem \ref{npgp-thm7}. We first consider the case that the polynomial lies on the boundary of the PSD cone since the general case will be reduced to this case. As in the proof of Lemma \ref{npgp-thm4}, by using undetermined coefficients, the existence of such a decomposition is reduced to the existence of a nonnegative solution for a particular linear system, which is then further reduced to the existence of a nonnegative solution for a tuple of subsystems by Lemma \ref{npgp-lm1}.

To state the theorem, we need a technical condition on the Newton polytope. Let $\Delta$ be a polytope of dimension $d$. For a vertex $\a$ of $\Delta$, we say that $\Delta$ is {\em simple} at $\a$ if $\a$ is the intersection of precisely $d$ edges.
\begin{theorem}\label{npmt-thm1}
Let $f=\sum_{i=1}^mc_i \x^{\a_i}-\sum_{j=1}^ld_j\x^{\b_j}\in\R[\x]$ with $\a_i\in(2\N)^n,c_i\in\R_+,i=1,\ldots,m$, $\b_j\in\New(f)^{\circ},j=1,\ldots,l$. Assume that $\New(f)$ is simple at some vertex, all $\b_j$ lie in the same side of every hyperplane determined by points among $\{\a_1,\ldots,\a_m\}$, and there exists a point $\bv=[v_k]\in(\R^*)^n$ such that $d_j\bv^{\b_j}>0$ for all $j$. Then $f\in\PSD$ if and only if $f\in\SONC$.
\end{theorem}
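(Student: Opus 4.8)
The plan is to reduce the multiple-negative-term case to the single-negative-term case (Theorem~\ref{npgp-thm7}), which we already know how to handle. The sufficiency is trivial as always, so assume $f\in\PSD$. The first move is to use the hypothesis on the point $\bv=(v_k)\in(\R^*)^n$: after the sign-flipping substitution $x_k\mapsto\operatorname{sgn}(v_k)x_k$ (which sends $\PSD$ to $\PSD$ and $\SONC$ to $\SONC$), we may assume all $v_k>0$, i.e.\ $v_k=1$ is essentially attainable and $d_j>0$ for all $j$ when evaluated positively; more to the point, we may restrict attention to nonnegativity over $\R_{>0}^n$, since all inner terms can be arranged to be negative simultaneously on a positive orthant point and all vertex terms are monomial squares. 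Next I would dispose of the boundary issue: if some $\b_j$ lies on $\partial\New(f)$, restrict to the face containing it exactly as in Lemma~\ref{sec3-lm}; so assume every $\b_j\in\New(f)^{\circ}$, and as before reduce to $\dim(\New(f))=n$ by a monomial transformation.

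The heart of the argument is a Helly-type splitting of $f$ into $l$ polynomials, each carrying a single negative term. The hypothesis that $\New(f)$ is \emph{simple} at some vertex $\a_0$ is what makes this possible: simplicity at $\a_0$ means the $n$ edges emanating from $\a_0$ span $\R^n$, so the polytope looks, near $\a_0$, like a simplicial cone, and one can triangulate $\New(f)$ using $\a_0$ as an apex so that every $\b_j$ lies in the interior of one of the resulting full-dimensional simplices $\Delta$. The hypothesis that all $\b_j$'s lie on the same side of every hyperplane spanned by points of $\{\a_1,\dots,\a_m\}$ guarantees this triangulation can be chosen compatibly for all $j$ at once, i.e.\ there is a single simplicial subdivision $\{\Delta_1,\dots,\Delta_s\}$ of $\New(f)$ with each $\b_j$ in the interior of exactly one $\Delta_{k(j)}$. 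I would then set up a linear system in the unknown ``split coefficients'' $c_{ik}$ (the coefficient of $\x^{\a_i}$ assigned to the $k$-th piece) and ``split circuit values'' $d_{jk}$, precisely mirroring the system \eqref{npgp-eq2}--\eqref{npgp-eq3} in the proof of Lemma~\ref{npgp-thm4}, now evaluated at a common zero $\x_*\in\R_{>0}^n$ of $f$; feasibility of this system (with nonnegative $c_{ik}$) is again established by Lemma~\ref{npgp-lm1} and Lemma~\ref{npgp-lm2}, using Corollary~\ref{npgp-cor1} to localize to one deleted variable at a time, exactly as before. Existence of the common zero $\x_*$ is where one invokes Theorem~\ref{sec1-thm2} / Lemma~\ref{sec1-lm} in the multi-negative setting: at the supremum value of the scaling parameter, $f$ attains $0$ at an interior critical point of $\R_{>0}^n$. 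Once the split is feasible, each piece $f_k=\sum_{i}c_{ik}\x^{\a_i}-\sum_{j:k(j)=k}d_j\x^{\b_j}$ is a nonnegative polynomial with Newton polytope the single simplex $\Delta_k$ and with all its negative terms negative on $\bv$ — hence a SONC polynomial by Theorem~\ref{nc-thm2} — and $f=\sum_k f_k\in\SONC$.

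The main obstacle, and the step that requires the most care, is the combinatorial geometry: showing that the two hypotheses (simplicity at one vertex, and the ``same side of every spanning hyperplane'' condition on the $\b_j$) genuinely produce a \emph{single} simplicial subdivision of $\New(f)$ whose maximal cells each contain at most one $\b_j$ in their relative interior, and, crucially, that the rank conditions of Lemma~\ref{npgp-lm1} hold for the resulting coefficient matrix so that the inductive deletion argument goes through. One must verify that deleting the equations/variables attached to one cell drops the rank by exactly one — this used $\dim\Delta_k=n$ and the interior condition in the one-term case, and here it additionally relies on the subdivision being ``generic'' enough near each $\b_j$; a perturbation argument (perturb the $\b_j$ slightly, as at the end of the proof of Lemma~\ref{npgp-thm4}, then pass to the limit) should handle the degenerate configurations. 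Everything downstream — reading off that each $d_{jk}$ is at most the relevant circuit number, invoking Theorem~\ref{nc-thm1} for each piece — is routine.
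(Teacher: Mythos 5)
Your overall strategy (find a zero $\x_*$ at a critical value of one of the $d_j$'s, split the coefficients by a linear system, certify feasibility via Lemma \ref{npgp-lm1} and Lemma \ref{npgp-lm2}) is the right one, but two of your key steps do not work as described. First, the geometric picture is wrong: the hypothesis that all $\b_j$'s lie on the same side of every hyperplane determined by points among $\{\a_1,\ldots,\a_m\}$ does \emph{not} yield a simplicial subdivision with each $\b_j$ in its own cell. On the contrary, it forces every simplex with vertices among the $\a_i$'s that contains one $\b_j$ in its interior to be full-dimensional and to contain \emph{all} of the $\b_j$'s; the decomposition must therefore run over the entire (heavily overlapping) family $\{\Delta\mid \b_j\in\Delta^{\circ}\ \forall j,\ V(\Delta)\subseteq\{\a_1,\ldots,\a_m\}\}$, not over a triangulation. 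This is not a cosmetic difference: the rank computation and the nonnegative-feasibility step in Lemma \ref{npgp-lm1} hinge on the fact that, after deleting the equations attached to one simplex $\Delta_v$, every remaining $\a_i$ spans together with a facet of $\Delta_v$ another simplex $\Delta_{p_i}$ \emph{belonging to the same family}, which supplies the explicit nonnegative solution of the reduced system. With a non-overlapping subdivision that mechanism is unavailable, and a single cell of your subdivision would have to absorb all the negative terms with whatever positive coefficients happen to sit on its vertices, which need not suffice. Relatedly, your pieces $f_k$ carry several negative terms and get certified by Theorem \ref{nc-thm2}; the argument that actually closes is to split each simplex $l$ times (coefficients $c_{ijk}$, circuit numbers $d_{jk}$) so that every piece is a genuine circuit polynomial with one negative term.

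Second, you have misassigned the role of the simplicity hypothesis and left a gap at the existence of the common zero. Simplicity of $\New(f)$ at a vertex is not used to triangulate from an apex; it is needed to guarantee that at the critical value $d_l^*=\sup\{\tilde{d}_l\mid \sum_i c_i\x^{\a_i}-\sum_{j<l}d_j\x^{\b_j}-\tilde{d}_l\x^{\b_l}\in\PSD\}$ the extremal polynomial actually attains a zero in $\R_+^n$. Lemma \ref{sec1-lm} and Theorem \ref{sec1-thm2}, which you invoke for this, only treat a single negative term (one eliminates $d$ and lands in the setting of Theorem \ref{sec1-thm2}); with several negative terms this elimination is not available, the critical system is no longer of that form, and the existence (and uniqueness in $\R_+^n$) of the zero is a separate nontrivial result that the paper imports from \cite[Lemma 4.6]{wang}, where the simple-vertex condition enters. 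As written, your proof neither establishes the existence of $\x_*$ nor uses the simplicity hypothesis anywhere that it is actually needed.
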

\begin{proof}
First assume $\dim(\New(f))=n$ (so $m\ge n+1$). Otherwise, we can reduce to this case by applying an appropriate monomial transformation to $f$. If $l=1$, then the conclusion follows from Theorem \ref{npgp-thm7}. From now on, we assume $l>1$. The sufficiency is obvious. Suppose $f\in\PSD$. After a variable transformation $x_k\mapsto -x_k$ for all $k$ with $v_k<0$, we can assume $d_j>0$ for all $j$. Let
\begin{equation}\label{npmt-eq9}
d_l^*\triangleq\sup\{\tilde{d}_l\in\R\mid \tilde{f}=\sum_{i=1}^mc_i \x^{\a_i}-\sum_{j=1}^{l-1}d_j\x^{\b_j}-\tilde{d}_l\x^{\b_l}\in\PSD\}.
\end{equation}
Note that $d_l^*$ is well-defined since the set in (\ref{npmt-eq9}) is nonempty and has upper bounds. Let $f^*=\sum_{i=1}^mc_i \x^{\a_i}-\sum_{j=1}^{l-1}d_j\x^{\b_j}-d_l^*\x^{\b_l}$. Then $f^*=0$ has a zero in $\R_+^{n}$ (\cite[Lemma 4.2]{wang5}), which is denoted by $\x_*$. The assumption that all $\b_j$ lie in the same side of every hyperplane determined by points among $\{\a_1,\ldots,\a_m\}$ implies if a simplex $\Delta$ with vertices coming from $\{\a_1,\ldots,\a_m\}$ contains some $\b_j$, then $\dim(\Delta)=n$ and it contains all $\b_j$. Let
$$\{\Delta_1,\ldots,\Delta_r\}:=\{\Delta\mid\Delta\textrm{ is a simplex }, \b_j\in\Delta^{\circ}, j\in[l], V(\Delta)\subseteq\{\a_1,\ldots,\a_m\}\}$$
and $I_k:=\{i\in[m]\mid\a_i\in V(\Delta_k)\}$ for $k=1,\ldots,r$. We have $\dim(\Delta_k)=n$ for all $k$. For every $\b_j$ and every $\Delta_k$, since $\b_j\in\Delta_k^{\circ}$, we can write $\b_j=\sum_{i\in I_k}\lambda_{ijk}\a_i$, where $\sum_{i\in I_k}\lambda_{ijk}=1, \lambda_{ijk}>0, i\in I_k$. In a similar manner as we prove Lemma \ref{npgp-thm4}, let us consider the following system of linear equations in variables $\{c_{ijk}\}_{i,j,k}$, $\{d_{jk}\}_{j,k}$ and $\{s_{jk}\}_{j,k}$:
\begin{equation}\label{npmt-eq1}
\begin{cases}
\frac{c_{ijk}\x_*^{\a_i}}{\lambda_{ijk}}=d_{jk}\x_*^{\b_j}=s_{jk}, &\textrm{for }i\in I_k,k=1,\ldots,r,j=1,\ldots,l,\\
\sum_{k=1}^rd_{jk}=d_j, &\textrm{for }j=1,\ldots,l-1,\\
\sum_{k=1}^rd_{lk}=d_l^*,\\
\sum_{j=1}^l\sum_{k\in[r]\textrm{ with }i\in I_k} c_{ijk}=c_i, &\textrm{for }i=1,\ldots,m.
\end{cases}
\end{equation}
Eliminate the variables $\{c_{ijk}\}_{i,j,k}$ and $\{d_{jk}\}_{j,k}$ from (\ref{npmt-eq1}) and we obtain:
\begin{equation}\label{npmt-eq2}
\begin{cases}
\sum_{j=1}^l\sum_{k\in[r]\textrm{ with }i\in I_k}\lambda_{ijk}s_{jk}=c_i\x_*^{\a_i}, &\textrm{for }i=1,\ldots,m,\\
\sum_{k=1}^rs_{jk}=d_j\x_*^{\b_j}, &\textrm{for }j=1,\ldots,l-1,\\
\sum_{k=1}^rs_{lk}=d_l^*\x_*^{\b_l}.
\end{cases}
\end{equation}
\revision{If \eqref{npmt-eq2} has a nonnegative solution, then we can retrieve a SONC decomposition supported on the simplices $\{\Delta_k\}_k$ for $f^{*}$ as follows.} Assume that $\{s_{jk}^*\}_{j,k}$ is a nonnegative solution to (\ref{npmt-eq2}). Substitute $\{s_{jk}^*\}_{j,k}$ into the system of equations (\ref{npmt-eq1}), and we have $c_{ijk}=\lambda_{ijk}s_{jk}^*/\x_*^{\a_i}$ for $i\in I_k, k=1,\ldots,r,j=1,\ldots,l$. Let $f_{jk}=\sum_{i\in I_k}c_{ijk}\x^{\a_i}-d_{jk}\x^{\b_j}$ for $k=1,\ldots,r,j=1,\ldots,l-1$. Then by (\ref{npmt-eq1}) and by Proposition \ref{nc-prop1}, $d_{jk}$ is the circuit number of $f_{jk}$ and $f_{jk}$ is a nonnegative circuit polynomial for all $j,k$. By (\ref{npmt-eq1}), we have $f=\sum_{j=1}^{l-1}\sum_{k=1}^rf_{jk}+\sum_{k=1}^r(\sum_{i\in I_k}c_{ilk}\x^{\a_i}-\frac{d_l}{d_l^*}d_{lk}\x^{\b_l})$. Since $d_l\le d_l^*$, $\sum_{i\in I_k}c_{ilk}\x^{\a_i}-\frac{d_l}{d_l^*}d_{lk}\x^{\b_l}$ is a nonnegative circuit polynomial for all $k$ by Theorem \ref{nc-thm1}. Thus $f\in\SONC$ as desired. \revision{Our remaining task hence is to prove the following claim.}

{\bf Claim}: The linear system (\ref{npmt-eq2}) in variables $\{s_{jk}\}_{j,k}$ has a nonnegative solution.

{\em Proof of the claim}. Denote the coefficient matrix of (\ref{npmt-eq2}) by $A$ and denote the coefficient matrix of
\begin{equation}\label{npmt-eq3}
\begin{cases}
\sum_{j=1}^l\sum_{k\in[r]\textrm{ with }i\in I_k}\lambda_{ijk}s_{jk}=c_i\x_*^{\a_i}, &\textrm{for }i\in[m]\backslash I_v,\\
\sum_{k=1}^rs_{jk}=d_j\x_*^{\b_j},&\textrm{for }j\in[l-1]\backslash\{u\},\\
\sum_{k=1}^rs_{lk}=d_l^*\x_*^{\b_l},&\textrm{if }u\ne l
\end{cases}
\end{equation}
by $A_{uv}$ for every $u\in[l]$ and every $v\in[r]$. Note that \eqref{npmt-eq3} is obtained from \eqref{npmt-eq2} by removing the equations involving the variable $s_{uv}$. In order to invoke Lemma \ref{npgp-lm1} to prove that (\ref{npmt-eq2}) has a nonnegative solution, we need to check the following hypotheses:
\begin{enumerate}
    \item $\rank(A)>1$;
    \item $\rank(A_{uv})=\rank(A)-1$ for every $u\in[l]$ and every $v\in[r]$;
    \item \eqref{npmt-eq2} is consistent.
\end{enumerate}

\revision{Fix $u\in[l]$ and $v\in[r]$. For every $i\in[m]\backslash I_v$, since $\b_u\in\Delta_v^{\circ}$, there exists a facet $F$ of $\Delta_v$ such that $\b_u\in\Conv(V(F)\cup\{\a_i\})^{\circ}$. Let $\Conv(V(F)\cup\{\a_i\})=\Delta_{p_i}$ for some $p_i\in[r]$. It holds $p_{i_1}\ne p_{i_2}$ whenever $i_1\ne i_2$. For every pair $(j,k)$ such that $j=u,k\in[r]\backslash(\{v\}\cup\{p_i\mid i\in[m]\backslash I_v\})$ or $j\in[l]\backslash\{u\},k\in[r]\backslash\{v\}$, let $s_{jk}=0$ in (\ref{npmt-eq3}), and then we obtain:
\begin{equation}\label{npmt-eq4}
\begin{cases}
\lambda_{iup_i}s_{up_i}=c_i\x_*^{\a_i}, &\textrm{for }i\in[m]\backslash I_v,\\
s_{jv}=d_j\x_*^{\b_j}, &\textrm{for }j\in[l-1]\backslash\{u\},\\
s_{lv}=d_l^*\x_*^{\b_l},&\textrm{if }u\ne l.
\end{cases}
\end{equation}
It follows that $A_{uv}$ has full rank and $\rank(A_{uv})=m-|I_v|+l-1=m+l-(n+2)$. Moreover, $\rank(A)\ge\rank(A_{uv})+1=m+l-(n+1)$ and $\rank(\ker(A^{\intercal}))=m+l-\rank(A)\le n+1$.
Let $C:=\left[\begin{bmatrix}1\\ \a_1\end{bmatrix},\ldots,\begin{bmatrix}1\\ \a_m\end{bmatrix},\begin{bmatrix}-1\\ -\b_1\end{bmatrix},\ldots,\begin{bmatrix}-1\\ -\b_l\end{bmatrix}\right]$. Then,
\begin{align*}
    CA=\left[\sum_{j=1}^l\left(\sum_{i\in I_1}\begin{bmatrix}1\\ \a_i\end{bmatrix}\lambda_{ij1}-\begin{bmatrix}1\\ \b_j\end{bmatrix}\right),\ldots,\sum_{j=1}^l\left(\sum_{i\in I_r}\begin{bmatrix}1\\ \a_i\end{bmatrix}\lambda_{ijr}-\begin{bmatrix}1\\ \b_j\end{bmatrix}\right)\right]=[\mathbf{0},\ldots,\mathbf{0}],
\end{align*}
which implies that the row vectors of $C$ belong to the kernel of $A^\intercal$.
Since $\dim(\Delta_1)=n$, the volume of $\Delta_1$, which equals $\frac{1}{n!}|\det(D)|$ where $D$ is the matrix with column vectors $\begin{bmatrix}1\\ \a_i\end{bmatrix}$, $i\in I_1$, is nonzero. It follows that $\rank(C)=n+1$. As $\rank(\ker(A^\intercal))\le n+1$, we then conclude that $\rank(\ker(A^\intercal))=n+1$ and the row vectors of $C$ span the kernel of $A^\intercal$. As a result, $\rank(A)=m+l-(n+1)>1$ and $\rank(A_{uv})=\rank(A)-1$. The zero $\x_*$ of $f^*$ is also a minimizer of $f^*$. So it satisfies $\{f^*(\x_*)=0,\nabla f^*(\x_*)=\mathbf{0}\}$, which gives
\begin{equation}\label{npmt-eq8}
\begin{cases}
\sum_{i=1}^mc_i\x_*^{\a_i}-\sum_{j=1}^{l-1}d_j\x_*^{\b_j}-d_l^*\x_*^{\b_l}=0,\\
\sum_{i=1}^mc_i\a_i\x_*^{\a_i}-\sum_{j=1}^{l-1}d_j\b_j\x_*^{\b_j}-d_l^*\b_l\x_*^{\b_l}=\mathbf{0},
\end{cases}
\end{equation}
i.e., $C\cdot[c_1\x_*^{\a_1},\ldots,c_m\x_*^{\a_m},d_1\x_*^{\b_1},\ldots,d_{l-1}\x_*^{\b_{l-1}},d_l^*x_*^{\b_l}]^{\intercal}=\mathbf{0}$. Thus by Lemma \ref{npgp-lm2}, (\ref{npmt-eq2}) is consistent.}

Now by Lemma \ref{npgp-lm1}, in order to prove the claim, we only need to show that every subsystem (\ref{npmt-eq3}) in variables $\{s_{jk}\}_{j,k}\backslash\{s_{uv}\}$ has a nonnegative solution for all $u\in[l]$ and all $v\in[r]$.
Given $u\in[l]$ and $v\in[r]$, from (\ref{npmt-eq4}) we figure out $s_{up_i}=c_i\x_*^{\a_i}/\lambda_{iup_i}$ for $i\in[m]\backslash I_v$, $s_{jv}=d_j\x_*^{\b_j}$ for $j\in[l-1]\backslash\{u\}$, and $s_{lv}=d_l^*\x_*^{\b_l}$ if $u\ne l$. Hence
\begin{equation*}
\begin{cases}
s_{jk}=0, &\textrm{for }j=u,k\in[r]\backslash(\{v\}\cup\{p_i\mid i\in[m]\backslash I_v\})\textrm{ or }j\in[l]\backslash\{u\},k\in[r]\backslash\{v\},\\
s_{up_i}=c_i\x_*^{\a_i}/\lambda_{iup_i}, &\textrm{for }i\in[m]\backslash I_v,\\
s_{jv}=d_j\x_*^{\b_j}, &\textrm{for }j\in[l-1]\backslash\{u\},\\
s_{lv}=d_l^*\x_*^{\b_l},&\textrm{if }u\ne l
\end{cases}
\end{equation*}
is a nonnegative solution to (\ref{npmt-eq3}). So the claim is proved and the proof is completed.
\end{proof}

\begin{remark}\label{rm}
When $\dim(\New(f))=n$ and $m=n+1$ (so $\New(f)$ is a simplex), the assumptions that $\New(f)$ is simple at some vertex and that all $\b_j$ lie in the same side of every hyperplane determined by points among $\{\a_1,\ldots,\a_m\}$ clearly hold. In this case, Theorem \ref{npmt-thm1} identifies with Theorem \ref{nc-thm2}. Therefore, Theorem \ref{npmt-thm1} is a generalization of Theorem \ref{nc-thm2} to the case of polynomials with general Newton polytopes and with multiple negative terms.
\end{remark}

\revision{\begin{remark}\label{sec4:rm2}
A polynomial of the form in Theorem \ref{npmt-thm1} for which there exists a point $\bv=[v_k]\in(\R^*)^n$ such that $d_j\bv^{\b_j}>0$ for all $j$ is called {\em orthant-dominated} in \cite{murray}.
\end{remark}}

\begin{example}
Let $f_d=1+x^6+y^6+x^6y^6-x^2y-dx^4y$ and $d^*=\sup\{d\in\R_+\mid f_d\in\PSD\}$. We have $[2,1]^\intercal=\frac{1}{6}[6,6]^\intercal+\frac{1}{6}[6,0]^\intercal+\frac{2}{3}[0,0]^\intercal=\frac{1}{3}[6,0]^\intercal+\frac{1}{6}[0,6]^\intercal+\frac{1}{2}[0,0]^\intercal$, and $[4,1]^\intercal=\frac{1}{6}[6,6]^\intercal+\frac{1}{2}[6,0]^\intercal+\frac{1}{3}[0,0]^\intercal=\frac{2}{3}[6,0]^\intercal+\frac{1}{6}[0,6]^\intercal+\frac{1}{6}[0,0]^\intercal$.
\begin{center}
\begin{tikzpicture}
\draw (0,0)--(0,3);
\draw (0,0)--(3,0);
\draw (3,0)--(3,3);
\draw (0,3)--(3,3);
\draw (0,0)--(3,3);
\draw (0,3)--(3,0);
\fill (0,0) circle (2pt);
\node[below left] (1) at (0,0) {$1$};
\fill (3,0) circle (2pt);
\node[below right] (2) at (3,0) {$x^6$};
\fill (0,3) circle (2pt);
\node[above left] (3) at (0,3) {$y^6$};
\fill (3,3) circle (2pt);
\node[above right] (4) at (3,3) {$x^6y^6$};
\fill (1,1/2) circle (2pt);
\node[below] (5) at (1,1/2) {$x^2y$};
\fill (2,1/2) circle (2pt);
\node[below] (6) at (2,1/2) {$x^4y$};
\node (7) at (0.7,1.2) {$\Delta_2$};
\node (8) at (2.3,1.2) {$\Delta_1$};
\fill[fill=green,fill opacity=0.3] (0,0)--(0,3)--(3,0);
\fill[fill=blue,fill opacity=0.3] (0,0)--(3,0)--(3,3);
\end{tikzpicture}
\end{center}
The system of equations $\{f_d=0,\nabla f_d=\mathbf{0}\}$ in variables $\{x,y,d\}$ has exactly one zero $(x_*\approx1.04521,y_*\approx0.764724,d^*\approx2.11373)$ in $\R_+^{3}$. The linear system \eqref{npmt-eq2} becomes
\begin{equation}\label{npmt-eq12}
\begin{cases}
\frac{2}{3}s_{11}+\frac{1}{2}s_{12}+\frac{1}{3}s_{21}+\frac{1}{6}s_{22}=1\\
\frac{1}{6}s_{11}+\frac{1}{3}s_{12}+\frac{1}{2}s_{21}+\frac{2}{3}s_{22}=x_*^6\\
\frac{1}{6}s_{12}+\frac{1}{6}s_{22}=y_*^6\\
\frac{1}{6}s_{11}+\frac{1}{6}s_{21}=x_*^6y_*^6\\
s_{11}+s_{12}=x_*^2y_*\\
s_{21}+s_{22}=d^*x_*^4y_*
\end{cases}
\end{equation}
which has a nonnegative solution $(s_{11}\approx0.835429,s_{12}=0,s_{21}\approx0.729142,s_{22}=1.2)$. Thus from the proof of Theorem \ref{npmt-thm1}, we obtain a SONC decomposition of $f_{d^*}$ which is $f_{d^*}\approx(0.556953+0.106793x^6+0.533967x^6y^6-x^2y)+(0.243047+0.27962x^6+0.466033x^6y^6-0.798909x^4y)+(0.2+0.613587x^6+y^6-1.31482x^4y)$.
\end{example}

\begin{corollary}\label{npmt-cor1}
Let $f=\sum_{i=1}^mc_i \x^{\a_i}-\sum_{j=1}^ld_j\x^{\b_j}\in\R[\x]$ with $\a_i\in(2\N)^n,c_i\in\R_+,i=1,\ldots,m$, $\b_j\in\New(f)^{\circ},d_j\in\R_+,j=1,\ldots,l$ and $\dim(\New(f))=n$. Assume that $f$ is nonnegative and has a zero, $\New(f)$ is simple at some vertex, and all $\b_j$ lie in the same side of every hyperplane determined by points among $\{\a_1,\ldots,\a_m\}$. Then $f$ has exactly one zero in $\R_+^{n}$.
\end{corollary}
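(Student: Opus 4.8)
The plan is to reuse, essentially verbatim, the construction in the proof of Theorem~\ref{npmt-thm1} and then feed its output into the rigidity statement for circuit polynomials from Proposition~\ref{nc-prop1}. As in that proof we may assume $\dim(\New(f))=n$ (and $m>n+1$, the case $m=n+1$ being the simplex case already covered), and here all $d_j$ are already positive. Since each $\b_j\in\New(f)^{\circ}$ and $\New(f)$ is full-dimensional, every coordinate of every $\b_j$ is positive, so $f\in\PSD$ is equivalent to $f\ge0$ on $\R_+^n$; the hypothesis that $f$ has a zero will be used in the form that $f$ vanishes at some $\x_*\in\R_+^n$ (a zero sitting on a coordinate face does not suffice, which is the one spot where the statement has to be read with a little care).

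With $d_l^*$ and $f^*$ as in \eqref{npmt-eq9}, the proof of Theorem~\ref{npmt-thm1} produces, from the positive zero of $f^*$, a decomposition
\[
f=\sum_{j=1}^{l-1}\sum_{k=1}^{r}f_{jk}+\sum_{k=1}^{r}\Bigl(\sum_{i\in I_k}c_{ilk}\x^{\a_i}-\tfrac{d_l}{d_l^*}d_{lk}\x^{\b_l}\Bigr),
\]
in which every $f_{jk}=\sum_{i\in I_k}c_{ijk}\x^{\a_i}-d_{jk}\x^{\b_j}$ is a nonnegative circuit polynomial with circuit number $d_{jk}$ and Newton polytope $\Delta_k$ ($\dim\Delta_k=n$), and each summand of the last group is a nonnegative circuit polynomial with negative coefficient $\tfrac{d_l}{d_l^*}d_{lk}$. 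First I would deduce that $d_l=d_l^*$: all these summands are nonnegative and must vanish at $\x_*$, so if $d_l<d_l^*$ then, since $\sum_k d_{lk}=d_l^*>0$, at least one summand of the last group is a genuine circuit polynomial whose negative coefficient is strictly below its circuit number, hence strictly positive on $\R_+^n$ by Theorem~\ref{nc-thm1}, contradicting its vanishing at $\x_*\in\R_+^n$. Thus $f=f^*$ and $f=\sum_{j=1}^{l}\sum_{k=1}^{r}f_{jk}$, a sum of nonnegative circuit polynomials each with full-dimensional Newton polytope and with negative coefficient exactly equal to its circuit number.

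Uniqueness then falls out by evaluating this decomposition at an arbitrary zero $\x_{**}\in\R_+^n$ of $f$: nonnegativity of the summands forces $f_{jk}(\x_{**})=0$ for all $j,k$. A summand with $d_{jk}=0$ would be a nonzero sum of monomial squares, which cannot vanish on $\R_+^n$; hence every nonzero $f_{jk}$ is a circuit polynomial with full-dimensional Newton polytope whose negative coefficient equals its circuit number, so by Proposition~\ref{nc-prop1} its unique zero in $\R_+^n$ is the point $\x_*$ used to build the decomposition (indeed $c_{ijk}\x_*^{\a_i}/\lambda_{ijk}=d_{jk}\x_*^{\b_j}$ by construction, which is precisely the characterization of that zero). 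Since $f\not\equiv0$ there is at least one such genuine summand, whence $\x_{**}=\x_*$, and $f$ has exactly one zero in $\R_+^n$. I expect the bulk of the work to be the first step — verifying that the decomposition handed over by Theorem~\ref{npmt-thm1} has exactly the stated shape and that at least one genuine (non-monomial) circuit-polynomial summand survives — together with, as flagged above, pinning down the correct reading of ``$f$ has a zero'' as ``$f$ has a zero in the open orthant''.
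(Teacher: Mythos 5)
Your proposal is correct and follows the same route as the paper's own proof: obtain a SONC decomposition of $f$ via Theorem \ref{npmt-thm1} and then invoke the rigidity of zeros of circuit polynomials (Proposition \ref{nc-prop1}) on each summand. The difference is one of care rather than of method: the paper applies Proposition \ref{nc-prop1} to an arbitrary SONC summand and an arbitrary zero $\x$ of $f$ (passing to $|\x|$), whereas you track the explicit decomposition from the proof of Theorem \ref{npmt-thm1} and first establish $d_l=d_l^*$, so that every nonzero summand really is a full-dimensional circuit polynomial whose negative coefficient equals its circuit number --- which is precisely the hypothesis Proposition \ref{nc-prop1} requires and which the paper's two-line proof silently assumes. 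Your caveat that ``has a zero'' must be read as ``has a zero in $\R_+^n$'' (or at least in $(\R^*)^n$, from which one passes to $|\x|\in\R_+^n$ because all $d_j>0$) is not pedantry but necessary: for a zero on a coordinate hyperplane the statement genuinely fails, e.g. $f=x^4+y^4+x^4y^4-2x^3y^3$ satisfies every hypothesis of the corollary, vanishes at the origin, yet is strictly positive on $\R_+^2$ since $2$ is strictly below the circuit number $2\sqrt{2}$; so your version both fills the gap in the paper's argument and pins down the reading under which the corollary is true.
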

\begin{proof}
By Theorem \ref{npmt-thm1}, $f\in\SONC$. Suppose $f=\sum_{k=1}^rf_k$, where $f_k$ is a nonnegative circuit polynomial for all $k$. Let $\x_{*}$ be a zero of $f$. Then we have $f_k(\x_*)=0$ for all $k$. By Proposition \ref{nc-prop1}, $f_k(|\x_*|)=0$ and $|\x_*|$ is the only zero of $f_k$ in $\R_+^{n}$ for all $k$. Hence $|\x_*|$ is the only zero of $f$ in $\R_+^{n}$.
\end{proof}


In the remainder of this section, we give an example to illustrate that the condition that all $\b_j$ lie in the same side of every hyperplane determined by points among $\{\a_1,\ldots,\a_m\}$ in Theorem \ref{npmt-thm1} cannot be dropped.
\begin{example}
Let $f=1+4x^2+x^4-3x-3x^3$. Then $f\in\PSD$, but $f\notin\SONC$.
\end{example}
\begin{proof}
It is easy to verify that the minimum of $f$ is $0$ with the only minimizer $x_*=1$. By Theorem \ref{sec3-thm2} (which will be proved in the next section), to get a SONC decomposition for $f$, it suffices to consider the circuits: $\{0,2,1\},\{0,4,1\},\{0,4,3\},\{2,4,3\}$. We have $1=\frac{1}{2}\cdot0+\frac{1}{2}\cdot2=\frac{3}{4}\cdot0+\frac{1}{4}\cdot4$, and $3=\frac{1}{2}\cdot2+\frac{1}{2}\cdot4=\frac{1}{4}\cdot0+\frac{3}{4}\cdot4$.
\begin{center}
\begin{tikzpicture}
\draw (0,0)--(4,0);
\fill (0,0) circle (2pt);
\node[above] (1) at (0,0) {$1$};
\fill (1,0) circle (2pt);
\node[above] (2) at (1,0) {$x$};
\fill (2,0) circle (2pt);
\node[above] (3) at (2,0) {$x^2$};
\fill (3,0) circle (2pt);
\node[above] (4) at (3,0) {$x^3$};
\fill (4,0) circle (2pt);
\node[above] (5) at (4,0) {$x^4$};
\end{tikzpicture}
\end{center}
From the proof of Theorem \ref{npmt-thm1}, we have that if $f\in\SONC$, then the following linear system
\begin{equation}\label{npmt-eq5}
\begin{cases}
\frac{1}{2}s_1+\frac{3}{4}s_3+\frac{1}{4}s_4=1\\
\frac{1}{2}s_1+\frac{1}{2}s_3=4x_*^2\\
\frac{1}{4}s_2+\frac{1}{2}s_3+\frac{3}{4}s_4=x_*^4\\
s_1+s_2=3x_*\\
s_3+s_4=3x_*^3
\end{cases}
\end{equation}
in variables $\{s_1,s_2,s_3,s_4\}$ should have a nonnegative solution. However, (\ref{npmt-eq5}) has no nonnegative solution. This contradictory implies $f\notin\SONC$.
\end{proof}

\section{SONC decompositions preserve sparsity}\label{sec5}
For a nonnegative polynomial $f\in\R[\x]$, let $\Lambda(f):=\{\a\in\supp(f)\mid\a\in(2\N)^n\textrm{ and }c_{\a}>0\}$ (corresponding to the positive terms)
and $\Gamma(f):=\supp(f)\backslash\Lambda(f)$ (corresponding to the negative terms). Then we can write $f=\sum_{\a\in\Lambda(f)}c_{\a}\x^{\a}-\sum_{\b\in\Gamma(f)}d_{\b}\x^{\b}$ with $V(\New(f))\subseteq\Lambda(f)$ (Proposition \ref{nc-prop2}). For every $\b\in\Gamma(f)$, let
\begin{equation}
\mathscr{F}(\b):=\{\Delta\mid\Delta\textrm{ is a simplex, } \b\in\Delta^{\circ}, V(\Delta)\subseteq\Lambda(f)\}.
\end{equation}
Consider the following SONC decomposition for $f$:
\begin{equation}\label{sec5-eq}
f=\sum_{\b\in\Gamma(f)}\sum_{\Delta\in\mathscr{F}(\b)}f_{\b\Delta}+\sum_{\a\in\I}c_{\a}\x^{\a},
\end{equation}
where $f_{\b\Delta}$ is a nonnegative circuit polynomial supported on $V(\Delta)\cup\{\b\}$ for each $\Delta$ and $\I=\{\a\in\Lambda(f)\mid\a\notin\bigcup_{\b\in\Gamma(f)}\bigcup_{\Delta\in\mathscr{F}(\b)}V(\Delta)\}$. If $f$ admits a SONC decomposition of the form \eqref{sec5-eq}, then we say that $f$ decomposes into a {\em sum of nonnegative circuit polynomials without cancellation}.

In Theorem \ref{npgp-thm7} and Theorem \ref{npmt-thm1}, we have seen that nonnegative polynomials satisfying certain conditions decompose into sums of nonnegative circuit polynomials without cancellation. In this section, we shall prove that in fact every SONC polynomial decomposes into a sum of nonnegative circuit polynomials without cancellation. To this end, we first recall a connection between nonnegative circuit polynomials and sums of binomial squares (SBS).

\subsection{Nonnegative circuit polynomials and sums of binomial squares}
For a subset $M\subseteq\N^n$, define $\overline{A}(M):=\{\frac{1}{2}(\bu+\bv)\mid\bu\ne\bv,\bu,\bv\in M\cap(2\N)^n\}$ as the set of averages of distinct even lattice points in $M$. For a trellis $\A$, we say that $M$ is an {\em $\A$-mediated set} if $\A\subseteq M\subseteq\overline{A}(M)\cup\A$ \cite{re1}. It turns out that the problem whether a nonnegative circuit polynomial is an SOS polynomial is closely related to $\A$-mediated sets; see Theorem 5.2 in \cite{iw}. The following theorem states that for a nonnegative circuit polynomial $f=\sum_{\a\in\A} c_{\a}\x^{\a}-d\x^{\b}$, if $\b$ belongs to an $\A$-mediated set, then $f$ is actually a sum of binomial squares.
\begin{theorem}\label{sec3-thm1}
Let $f=\sum_{\a\in\A} c_{\a}\x^{\a}-d\x^{\b}\in\R[\x], d\ne0$ be a nonnegative circuit polynomial with $\b\in\New(f)^{\circ}$. If $\b$ belongs to an $\A$-mediated set $M$, then $f$ is a sum of binomial squares, i.e., $f=\sum_{2\bu,2\bv\in M}(a_{\bu}\x^{\bu}-b_{\bv}\x^{\bv})^2$ for some $a_{\bu},b_{\bv}\in\R$.
\end{theorem}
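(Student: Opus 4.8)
The plan is to reduce $f$ to the extremal ``agiform'' and then assemble the binomial-square decomposition by a recursion that follows the rule ``$\gamma\in M$ is a midpoint of two even points of $M$''.

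\emph{Normalization.} If $d\le 0$ and $\b\in(2\N)^n$ then $f$ is a sum of monomial squares (each being a degenerate binomial square $(\x^{\a/2})^2$ with $\a\in\A\subseteq M$), and if $d\le0$ with $\b\notin(2\N)^n$ a substitution $x_j\mapsto -x_j$ for one odd coordinate $\beta_j$ flips the sign of $d$ while fixing every $\x^{\a_i}$; so we may assume $d>0$. Writing $\Theta:=\Theta_f$, for $0<d\le\Theta$ we have $f=\frac{d}{\Theta}\big(\sum_{\a\in\A}c_\a\x^\a-\Theta\x^\b\big)+\big(1-\frac{d}{\Theta}\big)\sum_{\a\in\A}c_\a\x^\a$, a sum of monomial squares plus $\frac{d}{\Theta}$ times the extremal nonnegative circuit polynomial of the same support; so it suffices to treat $d=\Theta$. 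In that case, letting $\x_*\in\R_+^n$ be the zero from Proposition \ref{nc-prop1}, the substitution $x_i\mapsto (x_*)_i x_i$ together with division by $\Theta\x_*^\b$ and the relations $c_\a\x_*^\a=\lambda_\a\Theta\x_*^\b$ turn $f$ into the agiform $g=\sum_{\a\in\A}\lambda_\a\x^\a-\x^\b$, with $\b=\sum_{\a\in\A}\lambda_\a\a$, $\lambda_\a>0$, $\sum_\a\lambda_\a=1$. Since this substitution has positive entries, it preserves the property of being a sum of binomial squares in terms $\x^{\mathbf{u}}$ with $2\mathbf{u}\in M$; so I work with $g$ from now on.

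\emph{Recursive identity and linear system.} For a lattice point $\gamma\in\Conv(\A)$ with barycentric coordinates $(\mu_\a)$ relative to $\A$ set $g_\gamma:=\sum_\a\mu_\a\x^\a-\x^\gamma$, so $g=g_\b$ and $g_\a=0$ for $\a\in\A$. Since barycentric coordinates are affine, whenever $\gamma=\tfrac12(\mathbf{p}+\mathbf{q})$ with $\mathbf{p},\mathbf{q}\in\Conv(\A)$ one has the identity
\[
g_\gamma=\tfrac12\, g_{\mathbf{p}}+\tfrac12\, g_{\mathbf{q}}+\big(\tfrac1{\sqrt2}\x^{\mathbf{p}/2}-\tfrac1{\sqrt2}\x^{\mathbf{q}/2}\big)^2 .
\]
Now invoke the hypothesis: since $M$ is $\A$-mediated, every $\gamma\in M\setminus\A$ (including $\b$, which is interior, hence not a vertex) can be written $\gamma=\tfrac12(\mathbf{p}_\gamma+\mathbf{q}_\gamma)$ with $\mathbf{p}_\gamma\ne\mathbf{q}_\gamma$ both in $M\cap(2\N)^n$; moreover $\mathbf{p}_\gamma,\mathbf{q}_\gamma$ lie in the minimal face of $\Conv(\A)$ containing $\gamma$, so $g_{\mathbf{p}_\gamma},g_{\mathbf{q}_\gamma}$ are agiforms of the same kind (with $g_{\mathbf{p}_\gamma}=0$ if $\mathbf{p}_\gamma\in\A$), while $Q_\gamma:=\big(\tfrac1{\sqrt2}\x^{\mathbf{p}_\gamma/2}-\tfrac1{\sqrt2}\x^{\mathbf{q}_\gamma/2}\big)^2$ is a binomial square whose two exponents are halves of points of $M$. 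This yields the linear system $g_\gamma=\tfrac12 g_{\mathbf{p}_\gamma}+\tfrac12 g_{\mathbf{q}_\gamma}+Q_\gamma$ over $\gamma\in M\setminus\A$, in which only the even points $V:=(M\cap(2\N)^n)\setminus\A$ recur, since the chosen parents are always even.

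\emph{Solving the system with nonnegative coefficients.} Restricted to $V$ the system reads $\mathbf{g}=P\mathbf{g}+\mathbf{Q}$, where $P\ge0$ records, from each $\gamma\in V$, a ``move'' to $\mathbf{p}_\gamma$ or $\mathbf{q}_\gamma$ with weight $\tfrac12$ and $\A$ is absorbing; each row sum of $P$ is $\le1$. I claim $\A$ is reachable from every $\gamma\in V$ under these moves: if $S\subseteq V$ were the set of points from which $\A$ is unreachable, then for $\gamma\in S$ both parents $\mathbf{p}_\gamma,\mathbf{q}_\gamma$ must lie in $S$ (a parent in $\A$, or one from which $\A$ is reachable, would let $\gamma$ reach $\A$), so every point of $S$ is a midpoint of two distinct points of $S$ — impossible for an extreme point of $\Conv(S)$; hence $S=\emptyset$. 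Consequently the spectral radius of $P$ is $<1$, $I-P$ is invertible with $(I-P)^{-1}=\sum_{k\ge0}P^k\ge0$, and $\mathbf{g}=(I-P)^{-1}\mathbf{Q}$; thus every $g_\gamma$, and then $g=g_\b$ (covered by the system when $\b$ is even, and otherwise by one further application of the identity, whose parents are even), is a \emph{nonnegative} combination of the binomial squares $Q_\delta$ — and a nonnegative multiple of a binomial square is again a binomial square. Since the true polynomials $g_\gamma$ satisfy the same system and the solution is unique, no extraneous monomials appear: the exponents that occur are exactly halves of points of $M$. Undoing the normalization (a positive monomial substitution, and addition of monomial squares) gives the claimed SBS decomposition of $f$. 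The step I expect to be delicate is precisely this last passage: the mediated relation need not be acyclic — a point can be a midpoint of two even points each of which in turn ``needs'' the first — so one cannot induct on $|M|$ or linearly order $M$, and it is the substochastic-matrix together with the extreme-point argument that turns the possibly circular recursion into a finite nonnegative sum; the algebraic identity, the face bookkeeping, and the unwinding of the normalization are routine.
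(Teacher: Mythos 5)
Your proof is correct, and it takes essentially the same route as the sources the paper cites for this theorem (the paper itself gives no argument beyond pointing to Theorem 5.2 of Iliman--de Wolff and Theorem 4.4 of Reznick): the scaling at the positive zero $\x_*$ reducing the extremal circuit polynomial to the agiform is exactly the Iliman--de Wolff step, and the midpoint identity plus the substochastic-matrix resolution of the possibly circular mediation relation is exactly Reznick's argument. The only loose end is that you invoke Proposition \ref{nc-prop1}, which is stated for a full-dimensional simplex, to get $\x_*$; in the lower-dimensional case the required point with $c_{\a}\x_*^{\a}=\lambda_{\a}\Theta_f\x_*^{\b}$ still exists (the log-linear system is consistent precisely because $\Theta_f=\prod(c_{\a}/\lambda_{\a})^{\lambda_{\a}}$), so this is cosmetic.
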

\begin{proof}
The proof can be easily derived from Theorem 5.2 in \cite{iw} and Theorem 4.4 in \cite{re1}.
\end{proof}

Mediated sets were firstly studied by Reznick in \cite{re1}. For a trellis $\A$, there is a maximal $\A$-mediated set $\A^*$ satisfying $\overline{A}(\A)\subseteq\A^*\subseteq\Conv(\A)\cap\N^n$ which contains every $\A$-mediated set. Following \cite{re1}, a trellis $\A$ is called an {\em $H$-trellis} if $\A^*=\Conv(\A)\cap\N^n$. The following theorem states that every trellis is an $H$-trellis after multiplied by a sufficiently large integer.
\begin{theorem}(\cite[Theorem 3.5]{po})\label{sec3-prop}
Let $\A\subseteq\N^n$ be a trellis. Then $k\A$ is an $H$-trellis for any integer $k\ge n$.
\end{theorem}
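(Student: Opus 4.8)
The plan is to prove the equivalent statement that $M:=\Conv(k\A)\cap\N^n$ is itself a $k\A$-mediated set. Since $(k\A)^*$ is by definition the largest $k\A$-mediated set and always satisfies $(k\A)^*\subseteq\Conv(k\A)\cap\N^n$, this would force $(k\A)^*=\Conv(k\A)\cap\N^n$, which is exactly the assertion that $k\A$ is an $H$-trellis. Writing $k\A=\{k\bv_0,\dots,k\bv_n\}$ with $\bv_i\in(2\N)^n$, I therefore have to show that every lattice point $\x$ of $\Conv(k\A)$ other than the vertices $k\bv_i$ is the average of two \emph{distinct} even lattice points of $\Conv(k\A)$.

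First I would reduce to the case that $\x$ is interior, by induction on $n$ (equivalently on $\dim\Conv(k\A)$). If $\x$ lies on a proper face, that face is $\Conv(k\A_J)$ for some $J\subseteq\{0,\dots,n\}$ with $|J|\le n$, where $\A_J=\{\bv_i:i\in J\}$ is a trellis of dimension $|J|-1<n\le k$; after the usual lattice-preserving monomial transformation the induction hypothesis gives that $k\A_J$ is an $H$-trellis, so $\x\in(k\A_J)^*$. The union of any $k\A_J$-mediated set with $k\A$ is again $k\A$-mediated (a point of the union outside $k\A$ lies outside $k\A_J$, hence is an average of two even points of the $k\A_J$-mediated set), so $(k\A_J)^*\subseteq(k\A)^*$ and $\x\in(k\A)^*$. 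Thus we may assume $\dim\Conv(k\A)=n$ and $\x$ interior.

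Write $\x=\sum_{i=0}^n\lambda_i\bv_i$ in barycentric coordinates scaled so that $\sum_i\lambda_i=k$; interiority gives $\lambda_i>0$ for all $i$. The mechanism I would exploit: if $\z\in\Z^n$ satisfies $\z\equiv\x\pmod{2}$ and its unique barycentric coordinate vector $(\tau_i)$ (with $\sum_i\tau_i=0$) obeys $|\tau_i|\le\lambda_i$ for every $i$, then $\x+\z$ and $\x-\z$ are even lattice points of $\Conv(k\A)$ — their barycentric coordinates are the nonnegative numbers $\lambda_i\pm\tau_i$, and lying in $\R_{\ge0}^n$ they belong to $(2\N)^n$ — they are distinct when $\z\ne\mathbf{0}$, and $\x=\frac12((\x+\z)+(\x-\z))$. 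There is an easy case: if $\lambda_j\ge k/2$ for some $j$, take $\z:=\x-k\bv_j$, which is $\equiv\x\pmod{2}$, is nonzero because $\x$ is not a vertex, and has barycentric coordinate vector equal to $\lambda_i$ for $i\ne j$ and $\lambda_j-k$ at $j$, with $|\lambda_j-k|=k-\lambda_j\le\lambda_j$; the two even points are then $k\bv_j$ and $2\x-k\bv_j$.

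The remaining case, $\lambda_j<k/2$ for all $j$ (which occurs only when $n\ge2$), is the genuinely hard one, and it is here that the hypothesis $k\ge n$ is essential: now $\x$ sits away from every vertex, so $(\lambda_i)$ is forced to be fairly balanced, and one still must produce a nonzero $\z$ as above. The inequalities $|\tau_i|\le\lambda_i$ cut out a centrally symmetric convex body $P_\x$, and what is needed is a nonzero even lattice vector inside $P_\x$ when $\x$ is even, respectively a vector of the coset $\x+(2\Z)^n$ inside $P_\x$ when $\x$ is odd; the point of $k\ge n$ is that it supplies enough total barycentric mass, distributed over the $n+1$ coordinates, for such a short vector of $(2\Z)^n$ to fit, and a geometry-of-numbers (Minkowski / covering radius) estimate then closes the argument — the tight configurations being $k$ close to $n$ with $\x$ near a lower-dimensional face. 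Making this estimate uniform over all trellis geometries is the main obstacle; an alternative is to run the same induction through Reznick's inductive description of maximal mediated sets in \cite{re1}. Once the split is obtained in every case, $M$ is $k\A$-mediated, hence $(k\A)^*=\Conv(k\A)\cap\N^n$ and $k\A$ is an $H$-trellis.
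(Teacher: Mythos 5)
This theorem is quoted by the paper from Powers--Reznick \cite{po} without proof, so your attempt can only be judged on its own terms. Your framework is sound: reducing the claim to showing that $M=\Conv(k\A)\cap\N^n$ is $k\A$-mediated, the observation that a nonzero $\z\equiv\x\pmod 2$ whose barycentric coordinates satisfy $|\tau_i|\le\lambda_i$ produces the splitting $\x=\frac{1}{2}\bigl((\x+\z)+(\x-\z)\bigr)$ into two distinct even lattice points of the simplex, and the complete treatment of the case $\lambda_j\ge k/2$ are all correct. But the argument stops exactly where the theorem becomes nontrivial: in the case $\lambda_j<k/2$ for all $j$ you only assert that a Minkowski/covering-radius estimate ``closes the argument'' and concede that making it uniform ``is the main obstacle.'' That case is the entire content of the theorem, and the route you indicate does not obviously work: the body $P_\x=(\Conv(k\A)-\x)\cap(\x-\Conv(k\A))$ has one width of order $\lambda_i$, which for a lattice point $\x$ close to a facet can be very small, so a volume bound relative to $(2\Z)^n$ fails there; moreover in concrete examples (e.g.\ $\A=\{(0,0),(4,0),(0,2)\}$, $k=2$, $\x=(3,1)$) the only admissible $\z$ satisfies $|\tau_i|=\lambda_i$ for some $i$, so there is no slack for any crude counting estimate. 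Without this case the proof is incomplete, and you have not shown where the hypothesis $k\ge n$ actually enters.

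A secondary issue: in the reduction to interior points you apply the induction hypothesis to a face after a ``lattice-preserving monomial transformation,'' but the parity structure does not transport automatically. If $L$ is the difference lattice of $\mathrm{aff}(F)\cap\Z^n$, then $L\cap(2\Z)^n$ may strictly contain $2L$, and a lattice isomorphism $L\cong\Z^d$ need not carry $(2\Z)^n\cap\mathrm{aff}(F)$ onto $(2\Z)^d$; in particular the transformed vertices need not be even, so the face need not be a trellis in the lower-dimensional coordinates. This is repairable (Reznick's formalism in \cite{re1} is set up relative to the lattice generated by the trellis), but as written the induction step is not justified. The substantive gap, however, remains the unproven main case.
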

\begin{remark}
The polynomials in \cite{po} were assumed to be homogeneous. So we need $k\ge n$ instead of $k\ge n-1$ to adapt to our situation.
\end{remark}

From Theorem \ref{sec3-prop} together with Theorem \ref{sec3-thm1}, we know that every $n$-variate nonnegative circuit polynomial supported on $k\A$ and a lattice point in the interior of $\Conv(k\A)$ is a sum of binomial squares for any trellis $\A$ and an integer $k\ge n$.
\begin{lemma}\label{sec4-lm1}
Suppose that $f(x_1,\ldots,x_n)\in\R[\x]$ is a SONC polynomial. Then $f(x_1^k,\ldots,x_n^k)$ is a sum of binomial squares for any integer $k\ge n$.
\end{lemma}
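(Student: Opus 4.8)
The plan is to reduce a SONC polynomial to its constituent nonnegative circuit polynomials and then apply the substitution $x_i \mapsto x_i^k$ term by term. Suppose $f = \sum_{t} g_t$ where each $g_t$ is a nonnegative circuit polynomial (including possibly monomial squares). Then $f(x_1^k,\ldots,x_n^k) = \sum_t g_t(x_1^k,\ldots,x_n^k)$, so since sums of binomial squares (SBS) are closed under addition, it suffices to treat each $g_t$ separately. A monomial square $c\x^{2\a}$ becomes $c\x^{2k\a} = (\sqrt{c}\,\x^{k\a})^2$, a (binomial, hence trivially SBS) square. So the real content is the case where $g = \sum_{\a\in\A} c_{\a}\x^{\a} - d\x^{\b}$ is a genuine nonnegative circuit polynomial with $d\neq 0$, $\A$ a trellis, $\b\in\Conv(\A)^{\circ}$.

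For such a $g$, the substitution $x_i\mapsto x_i^k$ sends it to $\tilde g = \sum_{\a\in\A} c_{\a}\x^{k\a} - d\x^{k\b}$. First I would check that $\tilde g$ is again a nonnegative circuit polynomial supported on the trellis $k\A$: the vertices $k\a$ still form a simplex (scaling by $k$ is an affine-linear bijection), $k\b$ lies in its interior, the barycentric coefficients $\lambda_{\a}$ of $\b$ with respect to $\A$ are the same as those of $k\b$ with respect to $k\A$, so the circuit number is unchanged, $\Theta_{\tilde g} = \Theta_g$; and $k\b\in(2\N)^n$ whenever $\b\in(2\N)^n$ (indeed $k\b\in(2\N)^n$ as soon as $k$ is even, but more carefully: if $\b\in(2\N)^n$ then $k\b\in(2\N)^n$ for all $k$, and if $\b\notin(2\N)^n$ the two-sided bound $|d|\le\Theta_g = \Theta_{\tilde g}$ still applies). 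Hence by Theorem \ref{nc-thm1}, $\tilde g$ is nonnegative. Now apply the remarks following Theorem \ref{sec3-prop}: since $k \ge n$, $k\A$ is an $H$-trellis, so $\Conv(k\A)\cap\N^n$ is a $(k\A)$-mediated set, and $k\b\in\Conv(k\A)^{\circ}\cap\N^n$ lies in it; Theorem \ref{sec3-thm1} then gives that $\tilde g$ is a sum of binomial squares.

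Summing over $t$ finishes the argument: $f(x_1^k,\ldots,x_n^k)$ is a sum of sums of binomial squares, hence itself a sum of binomial squares. The one point requiring a little care — and the closest thing to an obstacle — is the bookkeeping on parities: one must confirm that the hypothesis $\b\in(2\N)^n$ (resp. $\b\notin(2\N)^n$) transfers correctly to $k\b$ so that the correct clause of Theorem \ref{nc-thm1} is invoked, and that $k\b\in\N^n$ so that Theorem \ref{sec3-thm1} applies. Since $\b\in\N^n$ this is automatic, and $\b\in(2\N)^n\Rightarrow k\b\in(2\N)^n$ for every $k$, while in the remaining case the two-sided estimate is what we use; so no genuine difficulty arises. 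A secondary subtlety is ensuring that the circuit polynomials $g_t$ in the SONC decomposition of $f$ really do have nonzero negative term or are monomial squares — but this is exactly the dichotomy in the definition, and both cases have been handled above.
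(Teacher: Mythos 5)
Your proposal is correct and follows essentially the same route as the paper: decompose $f$ into nonnegative circuit polynomials, observe that $x_i\mapsto x_i^k$ turns a circuit polynomial on a trellis $\A$ into one on $k\A$ with the same circuit number, and invoke Theorem \ref{sec3-prop} (so $k\A$ is an $H$-trellis for $k\ge n$) together with Theorem \ref{sec3-thm1} to get an SBS representation of each summand. The paper's own proof is a one-liner that delegates exactly this to the remark preceding the lemma; you have simply written out the details it leaves implicit.
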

\begin{proof}
Assume $f=\sum_i f_i$, where all $f_i$ are nonnegative circuit polynomials. For any integer $k\ge n$, since every $f_i(x_1^k,\ldots,x_n^k)$ is a sum of binomial squares (by Theorem \ref{sec3-thm1} and Theorem \ref{sec3-prop}), so is $f(x_1^k,\ldots,x_n^k)$.
\end{proof}

\subsection{SONC decompostions without cancellation}\label{sonc-wc}
Now we can prove: every SONC polynomial decomposes into a sum of nonnegative circuit polynomials without cancellation. The proof takes use of the SBS decompositions for SONC polynomials. The following lemma enables us to consider $f(x_1^k,\ldots,x_n^k)$ instead of $f(x_1,\ldots,x_n)$ for an odd number $k$.
\begin{lemma}\label{sec4-lm}
Let $f(x_1,\ldots,x_n)\in\R[\x]$ and $k\in\N$ be an odd number. Then $f(x_1,\ldots,x_n)$ decomposes into a sum of nonnegative circuit polynomials without cancellation if and only if $f(x_1^k,\ldots,x_n^k)$ decomposes into a sum of nonnegative circuit polynomials without cancellation.
\end{lemma}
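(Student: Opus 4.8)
The plan is to realize both sides of the equivalence as instances of the same combinatorial statement, transported along the ring isomorphism induced by the substitution $x_i\mapsto x_i^k$. Write $\phi_k\colon\R[\x]\to\R[\x]$ for the algebra homomorphism determined by $\phi_k(x_i)=x_i^k$. Since distinct monomials are sent to distinct monomials, $\phi_k$ is injective, and its image is precisely the subring $\R[x_1^k,\ldots,x_n^k]$; on exponents it acts by $\a\mapsto k\a$, and it leaves every coefficient unchanged, i.e. the coefficient of $\x^{k\a}$ in $\phi_k(g)$ equals the coefficient of $\x^\a$ in $g$. Thus $\phi_k$ is an isomorphism onto $\R[x_1^k,\ldots,x_n^k]$, with inverse given on monomials by $\x^{k\a}\mapsto\x^\a$.

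The key structural observation is that $\phi_k$ carries nonnegative circuit polynomials to nonnegative circuit polynomials, and its inverse does so too (on polynomials supported in $k\N^n$). Indeed, let $g=\sum_{\a\in\A}c_\a\x^\a-d\x^\b$ be a circuit polynomial, so $\A$ is a trellis and $\b\in\Conv(\A)^\circ$. Because $k$ is \emph{odd}, $k\a\in(2\N)^n$ if and only if $\a\in(2\N)^n$; hence $k\A\subseteq(2\N)^n$, and since scaling by $k$ maps a simplex to a simplex, $k\A$ is again a trellis, while $k\b\in\Conv(k\A)^\circ=k\cdot\Conv(\A)^\circ$. The barycentric coordinates of $\b$ relative to $\A$ coincide with those of $k\b$ relative to $k\A$, so $\phi_k(g)=\sum_{\a\in\A}c_\a\x^{k\a}-d\x^{k\b}$ has the same circuit number as $g$; moreover $\b\in(2\N)^n$ iff $k\b\in(2\N)^n$, again by oddness of $k$. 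By Theorem \ref{nc-thm1}, $g$ is a nonnegative circuit polynomial if and only if $\phi_k(g)$ is. All of these equivalences are reversible, so any nonnegative circuit polynomial whose support lies in $k\N^n$ is $\phi_k$ of a unique nonnegative circuit polynomial, with the supports related by scaling by $k$.

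It remains to check that the combinatorial scaffolding of \eqref{sec5-eq} is transported correctly. For any $g\in\R[\x]$ one has $\Lambda(\phi_k(g))=k\Lambda(g)$ and $\Gamma(\phi_k(g))=k\Gamma(g)$, since $k$ is odd and coefficients are preserved; consequently $V(\New(\phi_k(g)))=kV(\New(g))\subseteq k\Lambda(g)=\Lambda(\phi_k(g))$, so the standing hypothesis $V(\New)\subseteq\Lambda$ passes to $\phi_k(g)$. A simplex $\Delta'$ with $V(\Delta')\subseteq\Lambda(\phi_k(g))=k\Lambda(g)$ is exactly $k\Delta$ for a unique simplex $\Delta$ with $V(\Delta)\subseteq\Lambda(g)$, and for $\b\in\Gamma(g)$ one has $k\b\in(k\Delta)^\circ$ iff $\b\in\Delta^\circ$; hence $\mathscr{F}(k\b)$ (computed for $\phi_k(g)$) is $\{k\Delta\mid\Delta\in\mathscr{F}(\b)\}$ (computed for $g$), and $\tilde\A$ for $\phi_k(g)$ equals $k\tilde\A$ for $g$. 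Therefore, applying $\phi_k$ term by term to a decomposition \eqref{sec5-eq} of $f$ produces a decomposition \eqref{sec5-eq} of $\phi_k(f)$, which gives the ``only if'' direction. Conversely, in any decomposition \eqref{sec5-eq} of $\phi_k(f)$ every summand is supported on the vertex set of a simplex in $k\Lambda(f)$, on a point of $k\Gamma(f)$, or on a point of $k\tilde\A$, hence entirely inside $\R[x_1^k,\ldots,x_n^k]$; applying $\phi_k^{-1}$ and invoking the previous paragraph in reverse yields a decomposition \eqref{sec5-eq} of $f$, proving the ``if'' direction.

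The only place the hypothesis is really used is the parity bookkeeping of the second paragraph: if $k$ were even, $\phi_k^{-1}$ could turn an even exponent vector into an odd one, so the pulled-back polynomials need not be circuit polynomials at all, and the equivalence would break. Once that point is handled, everything else is the formal transport of the data $\Lambda,\Gamma,\mathscr{F}(\cdot),\tilde\A$ along the isomorphism $\phi_k\colon\R[\x]\xrightarrow{\ \sim\ }\R[x_1^k,\ldots,x_n^k]$, which I expect to be routine.
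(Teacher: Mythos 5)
Your proof is correct and follows essentially the same route as the paper: the paper's one-line proof rests on exactly the fact you establish in your second paragraph (that $g$ is a nonnegative circuit polynomial iff $g(x_1^k,\ldots,x_n^k)$ is, for odd $k$), and your remaining paragraphs just carefully verify the transport of $\Lambda,\Gamma,\mathscr{F},\tilde{\A}$ that the paper leaves implicit.
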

\begin{proof}
Notice that for an odd number $k$, $f(x_1,\ldots,x_n)$ is a nonnegative circuit polynomial if and only if $f(x_1^k,\ldots,x_n^k)$ is a nonnegative circuit polynomial. The lemma then follows from this fact easily.
\end{proof}

If a nonnegative polynomial $f$ can be written as
\revision{\begin{equation}\label{sage}
f=\sum_{\b\in\Gamma(f)}(\sum_{\a\in\Lambda(f)}c_{\b\a}\x^{\a}-d_{\b}\x^{\b})
\end{equation}}
such that every $\sum_{\a\in\Lambda(f)}c_{\b\a}\x^{\a}-d_{\b}\x^{\b}$ is an AGE polynomial, then we say that $f$ decomposes into a {\em sum of AGE polynomials without cancellation}. \revision{By Theorem \ref{npgp-thm8}, every AGE polynomial decomposes into a sum of nonnegative circuit polynomials without cancellation. So if $f$ decomposes into a sum of AGE polynomials without cancellation, then $f$ also decomposes into a sum of nonnegative circuit polynomials without cancellation.}
\begin{theorem}\label{sec3-thm2}
Let $f=\sum_{\a\in\Lambda(f)}c_{\a}\x^{\a}-\sum_{\b\in\Gamma(f)}d_{\b}\x^{\b}\in\R[\x]$. If $f\in\SONC$, then $f$ decomposes into a sum of nonnegative circuit polynomials without cancellation, i.e., $f$ admits a SONC decomposition of the form \eqref{sec5-eq}.
\end{theorem}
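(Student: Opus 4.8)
The plan is to reduce to a concrete sum‑of‑binomial‑squares certificate and then reorganize it so that no monomial outside $\supp(f)$ survives.

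\emph{Step 1 (reduce to an SBS).} Fix an odd integer $k\ge n$ and set $g(\x):=f(x_1^k,\ldots,x_n^k)$. Then $g\in\SONC$, and by Lemma~\ref{sec4-lm1} $g$ is a sum of binomial squares. Since $k$ is odd, $\x\mapsto(x_1^k,\ldots,x_n^k)$ is a bijection on monomials preserving membership in $(2\N)^n$, so $\supp(g)=k\,\supp(f)$, $\Lambda(g)=k\,\Lambda(f)$, $\Gamma(g)=k\,\Gamma(f)$ and $\mathscr{F}(k\b)=\{k\Delta:\Delta\in\mathscr{F}(\b)\}$; hence by Lemma~\ref{sec4-lm} it suffices to exhibit a decomposition of the form \eqref{sec5-eq} for $g$. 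Moreover, since by Theorem~\ref{npgp-thm8} every nonnegative banana polynomial already decomposes into circuit polynomials with the same support, it is enough to write $g$ as a sum of nonnegative banana polynomials with the same support.

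\emph{Step 2 (isolate the obstruction).} Write $g=\sum_{t}(p_t\x^{\bu_t}+q_t\x^{\bv_t})^2$ with $p_t\ne0$; a summand with $q_t=0$ is a monomial square, and every other summand is a nonnegative circuit polynomial with trellis $\{2\bu_t,2\bv_t\}$ whose (possibly negative) interior term is $2p_tq_t\x^{\mathbf{m}_t}$, $\mathbf{m}_t:=\bu_t+\bv_t$. Thus $g$ is automatically SONC; the whole difficulty is that an exponent of some summand need not lie in $\supp(g)$, because a midpoint $\mathbf{m}_t$ can cancel in $g$ or be out‑weighed by positive contributions, and then that summand, viewed as a circuit polynomial, is not supported inside $\supp(g)$. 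Call such an exponent \emph{illegal}.

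\emph{Step 3 (clear the illegal exponents).} Each summand contributes to a fixed exponent $\mathbf{w}$ with a single sign: positively if $\mathbf{w}\in\{2\bu_t,2\bv_t\}$, and with the sign of $p_tq_t$ if $\mathbf{w}=\mathbf{m}_t$. If $\mathbf{w}$ is illegal its net coefficient in $g$ vanishes, so the positive and negative $\x^{\mathbf{w}}$‑contributions of the summands balance. After splitting summands, $(p_t\x^{\bu_t}+q_t\x^{\bv_t})^2=\lambda(\,\cdot\,)^2+(1-\lambda)(\,\cdot\,)^2$, pair each negatively‑contributing summand with positively‑contributing ones (or monomial squares) of equal $\x^{\mathbf{w}}$‑weight and replace each pair by its sum; in any such sum the $\x^{\mathbf{w}}$‑terms cancel, and there is at most one negative monomial left---the already‑present negative monomial of the positive partner, at an exponent other than $\mathbf{w}$---so the sum is a nonnegative banana polynomial supported in (its former exponents)$\,\setminus\{\mathbf{w}\}$, and by Theorem~\ref{npgp-thm8} it re‑expands into circuit polynomials and monomial squares on that same set. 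Iterating over the illegal exponents, in an order chosen so that clearing one never reintroduces another, ends with a SONC decomposition of $g$ all of whose pieces are supported in $\supp(g)$. Collecting the pieces by their negative exponent (absorbing any piece whose negative exponent falls in $\Lambda(g)$, and distributing the remaining monomial squares) presents $g$ as a sum of nonnegative banana polynomials with support $\supp(g)$, which by Step 1 completes the proof. Finally, since no step cancels monomials carrying the same sign, the decomposition obtained uses no cancellation.

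\emph{Main obstacle.} The crux is making Step 3 rigorous: the re‑expansion of a banana polynomial via Theorem~\ref{npgp-thm8} may produce circuit polynomials touching other illegal exponents, so one must process lattice points in a well‑chosen order (for instance peeling them from the boundary of $\New(g)$ inward, so that an exponent is cleared only after every exponent that could reintroduce it has itself been cleared) and exhibit a monotone invariant---e.g., the multiset of surviving illegal exponents, weighted by the number of pieces meeting each and ordered with respect to the faces of $\New(g)$---that strictly decreases at each step. Proving termination and non‑reintroduction is where essentially all the work lies.
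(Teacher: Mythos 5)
Your Step 1 is exactly the paper's reduction (substitute $x_i\mapsto x_i^{k}$ for an odd $k\ge n$, invoke Lemma~\ref{sec4-lm1} to get a sum of binomial squares, and reduce via Lemma~\ref{sec4-lm} and Theorem~\ref{npgp-thm8} to writing the result as a sum of nonnegative banana polynomials with the same support). The gap is in Step 3, and you have named it yourself: the pairing-and-clearing procedure is not shown to terminate, and clearing one illegal exponent can reintroduce others, since the combined pieces (and their re-expansions via Theorem~\ref{npgp-thm8}) still touch illegal exponents other than the one just cleared. The proposed remedies --- an unspecified ``well-chosen order'' peeling from the boundary of $\New(g)$ inward, and an unspecified ``monotone invariant'' --- are exactly the content that a proof would have to supply, and it is not clear either exists: an illegal exponent can be the midpoint of one binomial and simultaneously an endpoint $2\bu_t$ of another, so there is no obvious partial order on illegal exponents under which the clearing is acyclic. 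As written, Step 3 is a proof strategy, not a proof.

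The paper sidesteps this global bookkeeping entirely by inducting on the number $m$ of binomial squares rather than on the set of illegal exponents. Writing $h=h'+(a_m\x^{\bu_m}-b_m\x^{\bv_m})^2$ with $h'=\sum_{i=1}^{m-1}(a_i\x^{\bu_i}-b_i\x^{\bv_i})^2$, the induction hypothesis gives $h'$ as a sum of nonnegative banana polynomials supported exactly on $\supp(h')$; then $\supp(h)$ and $\supp(h')$ differ only at the three exponents $2\bu_m$, $2\bv_m$, $\bu_m+\bv_m$, and each of the finitely many cases (the exponent stays in $\Gamma$, moves to $\Lambda$, or cancels) is resolved by an explicit one-line redistribution identity that manifestly preserves the banana structure. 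Because only one square is absorbed per step and the starting decomposition is already clean, no ordering argument and no termination invariant are needed. If you want to salvage your approach, restructuring it as this one-square-at-a-time induction is the way to do it.
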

\begin{proof}
By Lemma \ref{sec4-lm}, we only need to prove the theorem for $f(x_1^{2n+1},\ldots,x_n^{2n+1})$. In view of Theorem \ref{npgp-thm8}, we complete the proof by showing that $f(x_1^{2n+1},\ldots,x_n^{2n+1})$ decomposes into a sum of AGE polynomials without cancellation.

For simplicity, let $h=f(x_1^{2n+1},\ldots,x_n^{2n+1})$. By Lemma \ref{sec4-lm1}, we can write $h$ as a sum of binomial squares, i.e. $h=\sum_{i=1}^m(a_i\x^{\bu_i}-b_i\x^{\bv_i})^2$. To prove that $h$ decomposes into a sum of AGE polynomials without cancellation, let us do induction on $m$. When $m=1$, $h=(a_1\x^{\bu_1}-b_1\x^{\bv_1})^2=a_1^2\x^{2\bu_1}+b_1^2\x^{2\bv_1}-2a_1b_1\x^{\bu_1+\bv_1}$ and the conclusion obviously holds. Assume that the conclusion is correct for $m-1$ and now consider the case of $m$.
Let $h'=\sum_{i=1}^{m-1}(a_i\x^{\bu_i}-b_i\x^{\bv_i})^2=\sum_{\a\in\Lambda(h')}c_{\a}'\x^{\a}-\sum_{\b\in\Gamma(h')}d_{\b}'\x^{\b}$. By the induction hypothesis, we can write $h'=\sum_{\b\in\Gamma(h')}(\sum_{\a\in\Lambda(h')}c_{\b\a}'\x^{\a}-d_{\b}'\x^{\b})$ as a sum of AGE polynomials without cancellation. Then,
\begin{equation}\label{sec3-eq10}
h=\sum_{\b\in\Gamma(h')}(\sum_{\a\in\Lambda(h')}c_{\b\a}'\x^{\a}-d_{\b}'\x^{\b})+(a_m\x^{\bu_m}-b_m\x^{\bv_m})^2.
\end{equation}
From $h=h'+(a_m\x^{\bu_m}-b_m\x^{\bv_m})^2$, \revision{it follows that cancellations in \eqref{sec3-eq10} only occur among terms involving $\x^{2\bu_m}$, $\x^{2\bv_m}$, $\x^{\bu_m+\bv_m}$.
Our goal is to rewrite $h$ as a sum of AGE polynomials without cancellation by adjusting the terms involving $\x^{2\bu_m}$, $\x^{2\bv_m}$, $\x^{\bu_m+\bv_m}$ in (\ref{sec3-eq10})}.

First let us consider the terms involving $\x^{2\bu_m}$. If $2\bu_m\notin\Gamma(h')$, then we have nothing to do. If $2\bu_m\in\Gamma(h')$ and $2\bu_m\in\Gamma(h)$, then we must have $d_{2\bu_m}'>a_m^2$. By the equality
\begin{align*}
    &(\sum_{\a\in\Lambda(h')}c_{2\bu_m\a}'\x^{\a}-d_{2\bu_m}'\x^{2\bu_m})+a_m^2\x^{2\bu_m}+b_m^2\x^{2\bv_m}-2a_mb_m\x^{\bu_m+\bv_m}=\\
    &(1-\frac{a_m^2}{d_{2\bu_m}'})(\sum_{\a\in\Lambda(h')}c_{2\bu_m\a}'\x^{\a}
-d_{2\bu_m}'\x^{2\bu_m})+\sum_{\a\in\Lambda(h')}\frac{c_{2\bu_m\a}'a_m^2}{d_{2\bu_m}'}\x^{\a}+b_m^2\x^{2\bv_m}-2a_mb_m\x^{\bu_m+\bv_m},
\end{align*}
we obtain 
\begin{align*}
h=&\sum_{\b\in\Gamma(h')\backslash\{2\bu_m\}}(\sum_{\a\in\Lambda(h')}c_{\b\a}'\x^{\a}-d_{\b}'\x^{\b})+(1-\frac{a_m^2}{d_{2\bu_m}'})(\sum_{\a\in\Lambda(h')}c_{2\bu_m\a}'\x^{\a}
-d_{2\bu_m}'\x^{2\bu_m})\\
&+(\sum_{\a\in\Lambda(h')}\frac{c_{2\bu_m\a}'a_m^2}{d_{2\bu_m}'}\x^{\a}+b_m^2\x^{2\bv_m}-2a_mb_m\x^{\bu_m+\bv_m}),
\end{align*}
\revision{which is a sum of AGE polynomials without cancellation among terms involving $\x^{2\bu_m}$}. If $2\bu_m\in\Gamma(h')$ and $2\bu_m\in\Lambda(h)$, then we must have $a_m^2>d_{2\bu_m}'$, and we can write $h$ as
\begin{align*}
h=&\sum_{\b\in\Gamma(h')\backslash\{2\bu_m\}}(\sum_{\a\in\Lambda(h')}c_{\b\a}'\x^{\a}-d_{\b}'\x^{\b})\\
&+(\sum_{\a\in\Lambda(h')}c_{2\bu_m\a}'\x^{\a}
+(a_m^2-d_{2\bu_m}')\x^{2\bu_m}+b_m^2\x^{2\bv_m}-2a_mb_m\x^{\bu_m+\bv_m}),
\end{align*}
\revision{which is a sum of AGE polynomials without cancellation among terms involving $\x^{2\bu_m}$}. If $2\bu_m\in\Gamma(h')$ and $2\bu_m\notin\supp(h)$, then the terms $-d_{2\bu_m}'\x^{2\bu_m}$ and $a_m^2\x^{2\bu_m}$ must be cancelled in (\ref{sec3-eq10}). Hence we obtain the expression of $h$ as
\begin{align*}
h=\sum_{\b\in\Gamma(h')\backslash\{2\bu_m\}}(\sum_{\a\in\Lambda(h')}c_{\b\a}'\x^{\a}-d_{\b}'\x^{\b})+(\sum_{\a\in\Lambda(h')}c_{2\bu_m\a}'\x^{\a}+b_m^2\x^{2\bv_m}-2a_mb_m\x^{\bu_m+\bv_m}),
\end{align*}
\revision{which is a sum of AGE polynomials without cancellation among terms involving $\x^{2\bu_m}$}.

Continue to adjust the terms involving $\x^{2\bv_m}$ and $\x^{\bu_m+\bv_m}$ in the expression of $h$ in a similar way. Finally we can write $h$ as a sum of AGE polynomials without cancellation as desired.
\end{proof}

Due to the coincidence of the SONC cone and the SAGE cone, it is immediate from Theorem \ref{sec3-thm2} that every SAGE polynomial decomposes into a sum of AGE polynomials without cancellation. \revision{This result was also independently proved in \cite[Corollary 20]{murray} by different techniques. It was first proved in the context of signomials based on convex duality and then was specialized to the situation of polynomials. The proof given here, however, deals directly with polynomials and employs integrality of exponents in an essential way.} 

Theorem \ref{sec3-thm2} ensures that every SONC polynomial admits a SONC decomposition by using only the support from the original polynomial and with no cancellation. This is a very desired property (sparsity-preservation) to design efficient algorithms for sparse polynomial optimization based on SONC decompositions and is a distinguished difference from SOS decompositions. In the SOS case, a well-known result concerning sparsity due to Reznick states that if $f=\sum_if_i^2$, then $\supp(f_i)\subseteq\frac{1}{2}\New(f)$ (\cite[Theorem 1]{reznick1978extremal}), but generally cancellations occur among $f_i^2$'s. As a simple example, consider $f=3-4x+x^4=2(1-x)^2+(1-x^2)^2$. When expanding the squares on the right side, the monomial $x^2$ appears though it doesn't appear in the expression of $f$.

When applying SONC certificates to unconstrained polynomial optimization (i.e., minimizing a polynomial function over $\R^n$), the first problem is to decide which circuits are needed in construction of SONC decompositions. Once the set of candidate circuits is given, the rest of the computation can be done via relative entropy programming \cite{iw} or second order cone programming \cite{socp}. Hence the overall complexity greatly depends on the number of candidate circuits. Because of Theorem \ref{sec3-thm2}, one may only consider the circuits contained in the support of the input polynomial without loss of generality to avoid enumerating all possible circuits which might be an astronomical number.
In this sense, Theorem \ref{sec3-thm2} is crucial to decrease the number of candidate circuits. However, in general the number of circuits contained in the support of the input polynomial scales combinatorically with the number of terms. \revision{In \cite{katthan}, the notion of reduced circuits was proposed to remove redundant circuits further. It was proved that it suffices to consider reduced circuits to compute SONC decompositions, though the number of reduced circuits still scales combinatorically with the number of terms.}
On the other hand, by Carath\'eodory's theorem (\cite[Corollary 17.1.2]{ro}), it is possible to write a SONC polynomial $f$ as a sum of at most $|\supp(f)|$ nonnegative circuit polynomials. \revision{We still do not know whether there are theoretical obstacles to stop us from obtaining such a SONC decomposition efficiently}. In any case, more efforts are required to further reduce the number of candidate circuits and to make the computation more tractable.

As a comparison, when applying SAGE certificates to unconstrained polynomial optimization, the number of AGE polynomials in construction of SAGE decompositions equals the number of negative terms of the input polynomial owing to Theorem \ref{sec3-thm2}, and deciding whether a polynomial is an AGE polynomial can be performed via relative entropy programming \cite{murray}. Thus the whole computation can be done efficiently for sparse polynomials.

\section{Conclusions and discussions}\label{sec6}
This paper has studied several problems concerning SONC decompositions for nonnegative polynomials. We have proved that nonnegative polynomials with one negative term are SONC polynomials. This result implies that the SONC cone actually coincides with the SAGE cone. Under certain conditions, we have also proved that nonnegative polynomials with multiple negative terms are SONC polynomials. Moreover, we have proved that every SONC polynomial admits a SONC decomposition without cancellation. \revision{Following this line of research}, there are still many questions left for further investigation:

\begin{itemize}
    \item In Theorem \ref{npmt-thm1}, we have used a technical condition that the Newton polytope is simple at some vertex to finish the proof. It is not clear whether this condition can be dropped. The answer seems closely related to the existence of positive zeros for a particular system of polynomial equations (\cite{wang5}).
    \item Even though the number of candidate circuits is significantly reduced thanks to the SONC decomposition without cancellation \eqref{sec5-eq} we have provided, the computation of such a SONC decomposition is still generally intractable because the number of circuits used in \eqref{sec5-eq} scales combinatorically with the number of terms of the input polynomial. As to unconstrained polynomial optimization, one may rely on certain heuristics to obtain a reasonable number of circuits as \cite{se} or \cite{socp} did at the cost of losing some accuracy. One would also like to seek an approach to decrease the number of circuits without losing accuracy. The recent work in \cite{pap} made the first step towards this direction. See also the discussion at the end of Section \ref{sonc-wc}.
    \item The fact that every SONC polynomial after an appropriate dilation of the support is a sum of binomial squares, which we rely on to prove Theorem \ref{sec3-thm2}, indicates the possibility of computing SONC decompositions via second order cone programming. The recent work in \cite{socp} is a good start on this topic.
    \item Another interesting and also important question is to what extent the results of this paper can be generalized to the case of nonnegativity over a subset of $\R^n$? Does the sparsity-preserving property still hold for certain classes of subsets? The answers to these questions would help to leverage SONC certificates to solve constrained polynomial optimization problems. The recent work in \cite{murray3,naumann2021sublinear} can be viewed as attempts towards this direction. 
\end{itemize}

\section*{Acknowledgments}
The author would like to thank the anonymous referees for their helpful suggestions, which have led to a much-improved paper.

\bibliographystyle{siamplain}
\bibliography{refer}
\end{document}